\tikzset{>=stealth', arrow/.style={->}}
\newsavebox{\@brx}
\newcommand{\llangle}[1][]{\savebox{\@brx}{\(\m@th{#1\langle}\)}%
  \mathopen{\copy\@brx\kern-0.5\wd\@brx\usebox{\@brx}}}
\newcommand{\rrangle}[1][]{\savebox{\@brx}{\(\m@th{#1\rangle}\)}%
  \mathclose{\copy\@brx\kern-0.5\wd\@brx\usebox{\@brx}}}
\newtheorem{theorem}{Theorem}[section]
\newtheorem{corollary}[theorem]{Corollary}
\newtheorem{prop}[theorem]{Proposition}
\theoremstyle{definition}
\newtheorem{problem}[theorem]{Problem}
\newtheorem{definition}[theorem]{Definition}
\newtheorem{example}[theorem]{Example}
\newtheorem{remark}[theorem]{Remark}
\numberwithin{equation}{subsection}
\newtheorem*{ack}{Acknowledgement}
\newcommand{\Env}{\operatorname{Env}}
\newcommand{\Alex}{\operatorname{Alex}}
\newcommand{\Aut}{\operatorname{Aut}}
\newcommand{\Conj}{\operatorname{Conj}}
\newcommand{\C}{\operatorname{C}}
\newcommand{\Inn}{\operatorname{Inn}}
\newcommand{\Hom}{\operatorname{Hom}}
\newcommand{\Z}{\operatorname{Z}}
\newcommand{\lcm}{\operatorname{lcm}}
\newcommand{\Fix}{\mathrm{Fix}}
\begin{document}
	\title{Fundamental $n$-quandles of links are residually finite}
	\author{Neeraj Kumar Dhanwani}	
	\author{Deepanshi Saraf}
	\author{Mahender Singh}
	
	\address{Department of Mathematical Sciences, Indian Institute of Science Education and Research (IISER) Mohali, Sector 81,  S. A. S. Nagar, P. O. Manauli, Punjab 140306, India.}
	\email{neerajk.dhanwani@gmail.com}
	\email{saraf.deepanshi@gmail.com}
	\email{mahender@iisermohali.ac.in}

\subjclass[2020]{Primary 57M27; Secondary 20E26, 57M05, 20N05}
\keywords{Cyclic branched cover, free product, irreducible 3-manifold, fundamental quandle of link, fundamental $n$-quandle of link, residually finite quandle, subquandle separability}

\begin{abstract}
In this paper, we investigate the residual finiteness and subquandle separability of quandles, properties that respectively imply the solvability of the word problem and the generalized word problem for quandles. From Winker's work, we know that fundamental $n$-quandles of oriented links, which are canonical quotients of their fundamental quandles, are closely associated with $n$-fold cyclic branched covers of the 3-sphere branched over these links. We prove that the fundamental $n$-quandle of any oriented link in the 3-sphere is residually finite for each $n\ge 2$. 
This supplements the recent result by Bardakov, Singh and the third author on residual finiteness of fundamental quandles of oriented links, and the classification by Hoste and Shanahan of links whose fundamental $n$-quandles are finite for some $n$.  We also establish several general results on these finiteness properties and identify many families of quandles admitting them.
\end{abstract}
	
\maketitle
	
\section{Introduction}
Subgroup separability, as defined by Mal'cev in \cite{Malvev1958}, has applications in combinatorial group theory and low dimensional topology. If a finitely presented group is residually finite, then it has the solvable word problem \cite{MR0008991}. More generally, if a finitely presented group is subgroup separable, then it has the solvable generalised word problem \cite{Malvev1958}. In 3-manifold topology, subgroup separability has been applied to resolve immersion-to-embedding problems. For example, it is known that subgroup separability enables the passage from immersed incompressible surfaces to embedded incompressible surfaces in finite covers \cite{MR1851085}. The aim of this paper is to investigate these properties in the category of quandles with a focus on fundamental $n$-quandles of oriented links in the 3-sphere. 
\par 
Quandles are right distributive algebraic structures that appear as almost complete invariants of knots, and as non-degenerate set-theoretical solutions to the Yang-Baxter equation. More formally, a quandle is a set with a binary operation that satisfies axioms modelled on the three Reidemeister moves of planar diagrams of links in the 3-sphere. Joyce \cite{Joyce1979}  and Matveev \cite{MR0672410} independently proved that one can associate a quandle $Q(L)$ to each oriented link $L$, called the fundamental quandle of $L$,  which is an invariant of the isotopy type of $L$. Further, they showed that if $K_1$ and $K_2$ are two oriented knots with $Q(K_1) \cong Q(K_2)$, then there is a homeomorphism of the 3-sphere mapping $K_1$ onto $K_2$, not necessarily preserving the orientation of the ambient space. Although, the fundamental quandle is an almost complete invariant for oriented  knots, it is usually difficult to check whether two quandles are isomorphic. This has sparked the search for properties and invariants of these structures that are simpler to determine or calculate. Since fundamental quandles of links in the 3-sphere are always infinite, except for the case when the link in question is the unknot or the Hopf link \cite{MR3938088}, it is natural to ask whether these quandles exhibit more general finiteness properties. In this direction, Bardakov, Singh and the third author proved in \cite{MR3981139, MR4075375} that, along with many classes of quandles arising from groups, the fundamental quandles of oriented links in the 3-sphere are residually finite. Fox's classical work on link groups, where he examined their finite quotients and coloring invariants using finitely many colors, can be reinterpreted as the study of homomorphisms from the fundamental quandles of links onto finite quandles  \cite{MR0146828}. It follows from the residual finiteness of fundamental quandles of links that every oriented link, except the unknot, has a non-trivial coloring by a finite quandle.  Further, it is proved in  \cite{MR3981139}  that every finitely presented residually finite quandle has the solvable word problem.  However, in \cite{MR3440592}, Belk and McGrail showed that there exists a finitely presented quandle with an undecidable word problem. Based on the preceding result, it follows that such a quandle cannot be residually finite. 

In this paper, we carry out this study further in two directions. Firstly, we consider the residual finiteness of canonical quotients of fundamental quandles of oriented links in  the 3-sphere, called fundamental $n$-quandles, where $n \ge 2$ is an integer. 
The fundamental $n$-quandles were first considered by Joyce in \cite{Joyce1979}. Though fundamental $n$-quandles are less sensitive invariants of  oriented links, they are considerably more tractable and admit deep connections with $n$-fold cyclic branched covers of the 3-sphere branched over links, as developed by Winker in \cite{MR2634013}. Building on his work, Fish and Lisitsa \cite{Fish-Lisitsa} developed an algorithm that uses fundamental 2-quandles to detect the unknot efficiently, and conjectured that the fundamental 2-quandle of a knot is residually finite. 
\par
Further, Hoste and Shanahan in \cite{MR3704243} proved that the fundamental $n$-quandle of an  oriented  link $L$ in the 3-sphere is finite if and only if the fundamental group $\pi_{1}(\widetilde{M}_{n}(L))$ of the $n$-fold cyclic branched cover $\widetilde{M}_{n}(L)$ of the 3-sphere branched over $L$ is finite. As they state in their paper, Przytycki had communicated this to them as a conjecture. In fact, using Thurston's geometrisation theorem and Dunbar's classification of spherical 3-orbifolds, Hoste and Shanahan derived the complete list of oriented links which have a finite fundamental $n$-quandle for some $n \ge 2$. It turns out that most links have infinite fundamental $n$-quandles for nearly all values of $n$. Therefore, it is intriguing to ask whether these $n$-quandles are residually finite. An affirmative answer to this question would imply the solvability of the word problem for these non-associative algebraic structures, and would prove the conjecture of Fish and Lisitsa.
\par
Employing consequences of Thurston's geometrisation theorem and related results, we first prove that if $L$ is an oriented link in the 3-sphere,  then $\pi_{1}(\widetilde{M}_{n}(L))$ is abelian subgroup separable (Theorem \ref{piM abelian subgroup separable}). Using this result and a description of $\pi_{1}(\widetilde{M}_{n}(L))$ as a subgroup of a canonical quotient of the link group of $L$, we prove that the fundamental $n$-quandle of any oriented link is residually finite for each $n \ge 2$ (Theorem \ref{n quandles link res finite}).  
\par 
Secondly, we develop a general theory of subquandle separability of quandles, which implies the solvability of the generalised word problem for these algebraic structures. Among other results, we prove that certain subquandles of quandles arising from subgroup separable groups are separable (Proposition \ref{alex-subseparable}). We also establish subquandle separability of certain twisted unions of subquandle separable quandles (Proposition \ref{twisted union subquandle sep}), abelian quandles generated by two elements (Theorem \ref{two generated abelian quandle}), and finitely generated free abelian quandles (Theorem \ref{free abelian subquandle sep}).
\medskip

\section{Preliminaries}
This section reviews the essential preliminary material that will be employed throughout the paper. To set our convention, recall that, a {\it quandle} is a set $X$  with a binary operation $\ast$ that satisfies the following axioms:
\begin{enumerate}
\item $x \ast x=x$ for all $x \in X$.
\item Given $x, y \in X$, there exists a unique $z \in X$ such that $x=z*y$.
\item $(x \ast y) \ast z=(x \ast z) \ast(y \ast z)$ for all $x, y, z \in X.$
\end{enumerate}

The second quandle axiom is equivalent to saying that there exists a dual binary operation $(x , y )  \mapsto x*^{-1} y $ on $X$ such that $x * y = z$ if and only if $x = z *^{-1} y$ for all $x, y, z \in X$. Analogous to groups, quandles can be represented by their presentations.

\begin{example}\label{example fundamental quandle}
Let $L$ be an oriented link in the 3-sphere $\mathbb{S}^3$. In \cite[Section 4.5]{Joyce1979} and \cite[Section 6]{MR0672410}, Joyce and Matveev independently gave a topological construction of the  {\it fundamental quandle} $Q(L)$ of $L$,  and proved it to be an invariant of the isotopy type of  $L$. Further, they  proved that $Q(L)$ can also be obtained from a regular diagram $D$ of $L$. Suppose that $D$ has $s$ arcs and $t$ crossings. We assign labels $x_{1}, x_{2}, \ldots, x_{s}$ to the arcs of $D$, and then introduce the relation $r_l$ given by  $x_k*x_j=x_i$ or $x_k*^{-1}x_j=x_i$ at the $l$-th crossing of $D$ as shown in Figure \ref{fundamental quandle}.  It is known due to \cite{Joyce1979, MR0672410} that $$Q(L) \cong \langle x_1, x_2, \ldots, x_s \mid r_1, r_2, \dots, r_t\rangle.$$

\begin{figure}[hbt!]
	\begin{subfigure}{0.4\textwidth}
		\centering
		\begin{tikzpicture}[scale=0.6]
			\node at (0.6,-1.2) {{\small $x_k$}};
			\node at (-1.2,0.4) {{\small $x_j$}};
			\node at (2,1.2) {{\small $x_k*x_j=x_i$}};
			\begin{knot}[clip width=6, clip radius=4pt]
				\strand[->] (-2,0)--(2,0);
				\strand[->] (0,-2)--(0,2);
			\end{knot}
		\end{tikzpicture}
		\caption{Positive crossing}
	\end{subfigure}
	\begin{subfigure}{0.4\textwidth}
		\centering
		\begin{tikzpicture}[scale=0.6]
			\node at (2.3,1.2) {{\small $x_k*^{-1}x_j=x_i$}};
			\node at (-1.2,0.4) {{\small $x_j$}};
			\node at (0.6,-1.2) {{\small $x_k$}};
			\begin{knot}[clip width=6, clip radius=4pt]
				\strand[->] (2,0)--(-2,0);
				\strand[->] (0,-2)--(0,2);
			\end{knot}
		\end{tikzpicture}
		\caption{Negative crossing}
	\end{subfigure}
\caption{Quandle relations at crossings.}
	\label{fundamental quandle}
\end{figure}
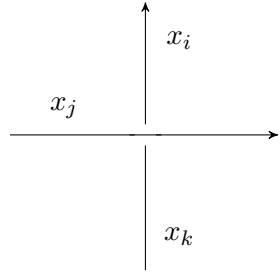
\end{example}

\begin{example}\label{more example quandle}
Though links in the 3-sphere are rich sources of quandles, many interesting examples arise particularly from groups, some of them will be used in later sections.
\begin{enumerate}
\item If $G$ is a group, then the set $G$ equipped with the binary operation $x*y= y^{-1} x y$ gives a quandle structure on $G$, called the {\it conjugation quandle}, and denoted by $\Conj(G)$.
\item Let $H$ be a subgroup of a group $G$ and $\alpha \in \Aut(G)$ that acts trivially on $H$. Then, the set $G/H$ of right cosets becomes a quandle with the binary operation $$Hx \ast Hy= H \alpha(xy^{-1})y.$$  In particular, if $\alpha$ is the inner automorphism of $G$ induced by an element $x_0$ in the centraliser of $H$ in $G$, then the quandle operation on $G/H$ becomes
\begin{align*}
	Hx \ast Hy= H x_{0}^{-1}xy^{-1}x_{0}y,
\end{align*}
and we denote this quandle by $(G/H,x_{0})$.
\item The preceding example can be  extended as follows. Let $G$ be a group, $\{x_i \mid i \in I \}$ be a set of elements of $G$, and $\{H_i \mid i \in I \}$ a set of subgroups of $G$ such that $H_i \le \C_G(x_i)$ for each $i$. Then, we can define a quandle structure on the disjoint union $\sqcup_{i \in I} G/H_i$ by
$$H_ix\ast H_jy=H_ix_i^{-1}xy^{-1}x_jy,$$ 
and denote this quandle by $\sqcup_{i \in I} (G/H_i,x_i ).$
\end{enumerate}
\end{example}
\par

If $X$ is a quandle and $x \in X$, then the map $S_x: X \rightarrow X$ given by $S_x(y)= y \ast x$ is an automorphism of $X$ fixing $x$. The group $\Inn(X)$ generated by such automorphisms is called the {\it inner automorphism group} of $X$. The orbits of $X$ under the natural action of  $\Inn(X)$ are called the {\it connected components} of $X$. Further, we say that $X$ is {\it connected} if $\Inn(X)$ acts transitively on $X$. 
\par 

Using the defining axioms  \cite[Lemma 4.4.7]{MR2634013}, any element of a quandle $X$ can be written in a left-associated product of the form
\begin{equation*}
\left(\left(\cdots\left(\left(x_0*^{\epsilon_1}x_1\right)*^{\epsilon_2}x_2\right)*^{\epsilon_3}\cdots\right)*^{\epsilon_{n-1}}x_{n-1}\right)*^{\epsilon_n}x_n,
\end{equation*}
where $x_i \in X$ and  $\epsilon_i \in \{ 1, -1 \}$. For simplicity, we write the preceding expression as
\begin{equation*}
x_0*^{\epsilon_1}x_1*^{\epsilon_2}\cdots*^{\epsilon_n}x_n.
\end{equation*}

Let  $n \ge2$ be an integer. A quandle $X$ is called an {\it $n$-quandle} if each $S_x$ has order dividing $n$. In other words, $X$ is an $n$-quandle if 
$$x \ast^{n} y:=x*\underbrace{y *y*\cdots *y}_{n ~\mathrm{times}}=x$$ for all $x,y \in X$.
\par

Given a quandle $X$ and an integer $n \ge2$, the $n$-quandle $X_n$ of $X$ is defined as the quotient of $X$ by the relations $$x*^n y:=x*\underbrace{y*y*\cdots*y}_{n~ \mathrm{times}}=x$$ for all $x,y \in X$.
\medskip


\subsection{Enveloping group}

To each quandle $X$, we associate its {\it enveloping group} $\Env(X)$, which is given by the presentation
\begin{gather}
	\Env(X)=\langle e_{x}, ~x \in X \mid e_{x \ast y}=e_{y}^{-1} e_{x} e_{y}~\textrm{for all}~x, y \in X \rangle.
\end{gather}

The association $X \mapsto \Env(X)$ defines a functor from the category of quandles to that of groups, which is left adjoint to the functor $G \mapsto \Conj(G)$ from the category of groups to that of quandles. 
\par

Analogously, there is a functor from the category of groups to the category of $n$-quandles for each $n\ge 2$. To be precise, given a group $G$, we consider the set
$$
Q_{n}(G)=\left\{x \in G \mid x^{n}=1\right\}
$$
equipped with the binary operation of conjugation, which is clearly an $n$-quandle. In the reverse direction, given an $n$-quandle $X$, we define its {\it $n$-enveloping group} to be
$$\Env_n(X)=\langle \bold{e}_x,~x \in X \mid \bold{e}_x^{n}=1,~ \bold{e}_{x  \ast y}=\bold{e}_y^{-1} \bold{e}_x \bold{e}_y~\textrm{for all}~x, y \in X \rangle.$$

It follows from \cite[Theorem 5.1.7]{MR2634013} that if a quandle $X$ has the presentation
$$
X=\left\langle x_{1}, x_{2}, \ldots, x_{s} \mid r_{1}, r_{2}, \ldots, r_{t}\right\rangle,
$$
then $\Env(X)$ has the presentation
$$
\Env(X)=\left\langle e_{{x}_{1}},e_{{x}_{2}},\ldots,  e_{{x}_{s}} \mid \bar{r}_{1}, \bar{r}_{2}, \ldots, \bar{r}_{t}\right\rangle,
$$
where each relation $\bar{r}_i$ is obtained from the relation $r_i$ by replacing each expression $x*y$ by $e_y^{-1} e_x e_y$ and $x*^{-1}y$ by $e_y e_x e_y^{-1}$. Furthermore, if $X$ is an $n$-quandle, then it follows that $\Env_{n}(X)$  has the presentation
$$
\Env_n(X)=\left\langle \bold{e}_{x_1}, \bold{e}_{x_2}, \ldots,\bold{e}_{x_s} \mid \bold{e}_{x_1}^n=1, \bold{e}_{x_2}^n=1, \ldots,  \bold{e}_{x_s}^n=1, \bold{r}_{1}, \bold{r}_{2}, \ldots, \bold{r}_{t}\right\rangle,
$$
where each relation $\bold{r}_i$ is obtained from the relation $r_i$ by replacing each expression $x*y$ by $\bold{e}_y^{-1} \bold{e}_x \bold{e}_y$ and $x*^{-1}y$ by $\bold{e}_y \bold{e}_x \bold{e}_y^{-1}$. Observe that $$\Env_n(X) \cong \Env(X)/ \llangle e_y^n, ~ y \in X\rrangle.$$ 
	

\subsection{Homogeneous representation of $n$-quandles}
Given an $n$-quandle $X$, there is a right action of $\Env(X)$ on $X$, which on generators of  $\Env(X)$ is given by $$x \cdot e_y  = x*y$$ for $x, y \in X$. Let $g=e_{y_1}^{\epsilon_1}e_{y_2}^{\epsilon_2} \cdots e_{y_r}^{\epsilon_r}$ be an element of $\Env(X)$, where  $y_i \in X$ and $\epsilon_i \in \{1, -1 \} $. Since $X$ is an $n$-quandle, for each $x \in X$, we have
\begin{eqnarray*}
x \cdot (g e_y^n g^{-1})  &=& x*^{\epsilon_1} y_1 *^{\epsilon_2} y_2* \cdots *^{\epsilon_r} y_r *\underbrace{y * y* \cdots *y}_{n-\textrm{times}} *^{-\epsilon_r} y_r* \cdots *^{-\epsilon_2} y_2 *^{-\epsilon_1} y_1\\
&=& x*^{\epsilon_1} y_1 *^{\epsilon_2} y_2* \cdots *^{\epsilon_r} y_r *^{-\epsilon_r} y_r* \cdots *^{-\epsilon_2} y_2 *^{-\epsilon_1} y_1\\
&=& x,
\end{eqnarray*}
and hence the action descends to an action of $\Env_n(X)$ on $X$. The following result can be proved easily, and we present a proof for the benefit of the reader.

\begin{prop}\label{finiteness of n-quandle}
Let $X$ be an $n$-quandle and $\{x_i \mid i \in I\}$ a set of representatives of orbits of $X$ under the action of $\Env(X)$. Let $H_i$ be the stabiliser of $x_{i}$ in $\Env_n(X)$ under the above action. Then $H_i$ lies in the centraliser of $\bold{e}_{x_i}$ in $\Env_n(X)$ and the orbit map induces an isomorphism $\sqcup_{i \in I} (\Env_n(X)/H_i,\bold{e}_{x_i}) \cong X$ of quandles.
\end{prop}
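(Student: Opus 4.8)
The plan is to realize $X$ as a disjoint union of coset quandles by exploiting the conjugation behaviour of the generators of the enveloping group under the action. The single computational fact that drives everything is the identity
\[
\bold{e}_{x \cdot g} = g^{-1} \bold{e}_x g \qquad \textrm{for all } x \in X,\ g \in \Env_n(X),
\]
which I would establish first. On generators it is exactly the defining relation $\bold{e}_{x \ast y} = \bold{e}_y^{-1} \bold{e}_x \bold{e}_y$, together with its dual $\bold{e}_{x \ast^{-1} y} = \bold{e}_y \bold{e}_x \bold{e}_y^{-1}$ (obtained by rewriting $x = (x \ast^{-1} y) \ast y$ and solving); the general case then follows by induction on the word length of $g$, using that the action is a right action, $x \cdot (gh) = (x \cdot g) \cdot h$.

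With this identity in hand, the centraliser claim is immediate: if $h \in H_i$, then $x_i \cdot h = x_i$, so $\bold{e}_{x_i} = \bold{e}_{x_i \cdot h} = h^{-1} \bold{e}_{x_i} h$, that is, $h \in \C_{\Env_n(X)}(\bold{e}_{x_i})$. This is precisely the condition required for $(\Env_n(X)/H_i, \bold{e}_{x_i})$ to be a well-defined quandle in the sense of Example \ref{more example quandle}(2), so the target $\sqcup_{i \in I}(\Env_n(X)/H_i, \bold{e}_{x_i})$ genuinely makes sense.

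Next I would define the orbit map $\phi \colon \sqcup_{i \in I} \Env_n(X)/H_i \to X$ by $\phi(H_i g) = x_i \cdot g$, noting first that the orbits of $X$ under $\Env(X)$ and under $\Env_n(X)$ coincide, since the action factors through the quotient; hence $\{x_i \mid i \in I\}$ is also a transversal for the $\Env_n(X)$-orbits. Well-definedness, injectivity on each coset space, and surjectivity are all routine orbit–stabiliser bookkeeping: $\phi$ is well defined because $x_i \cdot h = x_i$ for $h \in H_i$; it is surjective because every element of $X$ lies in some orbit; and $x_i \cdot g = x_j \cdot g'$ forces $i = j$ (they lie in the same orbit) and then $g(g')^{-1} \in H_i$, which gives injectivity.

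The remaining step, which carries the actual content, is that $\phi$ is a quandle homomorphism. Unwinding the coset-quandle operation gives
\[
\phi(H_i g \ast H_j h) = x_i \cdot \bigl(\bold{e}_{x_i}^{-1}\, g\, h^{-1}\, \bold{e}_{x_j}\, h\bigr),
\]
whereas applying the conjugation identity to the right-hand factor gives
\[
\phi(H_i g) \ast \phi(H_j h) = (x_i \cdot g) \cdot \bold{e}_{x_j \cdot h} = (x_i \cdot g) \cdot \bigl(h^{-1}\bold{e}_{x_j} h\bigr) = x_i \cdot \bigl(g\, h^{-1}\, \bold{e}_{x_j}\, h\bigr).
\]
The two expressions agree because the leading factor $\bold{e}_{x_i}^{-1}$ is absorbed: from $x_i \ast x_i = x_i$ we get $x_i \cdot \bold{e}_{x_i}^{-1} = x_i$, so the right-action property yields $x_i \cdot (\bold{e}_{x_i}^{-1} w) = (x_i \cdot \bold{e}_{x_i}^{-1}) \cdot w = x_i \cdot w$ for every $w \in \Env_n(X)$. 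Thus $\phi$ is a bijective quandle homomorphism, hence an isomorphism. I expect the only delicate point to be keeping the right-action conventions and the dual operation straight in the base identity; once those signs are pinned down, the homomorphism verification collapses to the one-line absorption of $\bold{e}_{x_i}^{-1}$.
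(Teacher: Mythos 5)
Your proposal is correct and follows essentially the same route as the paper's proof: establish that stabilisers centralise the corresponding generators via the conjugation identity $\bold{e}_{x\cdot g}=g^{-1}\bold{e}_x g$, check the orbit map is a bijection, and verify the homomorphism property by absorbing the leading $\bold{e}_{x_i}^{-1}$ using $x_i\cdot \bold{e}_{x_i}^{-1}=x_i$. The only difference is presentational — you isolate the conjugation identity as an explicit lemma proved by induction on word length, whereas the paper performs the same computation inline.
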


\begin{proof}
	Let $h \in H_i$ such that $h= \bold{e}_{x_1}^{\epsilon_1} \bold{e}_{x_2}^{\epsilon_2} \cdots \bold{e}_{x_r}^{\epsilon_r}$ for some $x_j \in X$ and $\epsilon_j \in \{1, -1 \}$. Then, we see that
	\begin{eqnarray*}
		h^{-1} \bold{e}_{x_i} h &=&  \bold{e}_{x_r}^{-\epsilon_r} \bold{e}_{x_{r-1}}^{-\epsilon_{r-1}} \cdots \bold{e}_{x_1}^{-\epsilon_1}\bold{e}_{x_i}  \bold{e}_{x_1}^{\epsilon_1} \bold{e}_{x_2}^{\epsilon_2} \cdots \bold{e}_{x_r}^{\epsilon_r} \\
		&=&  \bold{e}_{x_i *^{\epsilon_1} x_1 *^{\epsilon_{2}} x_{2}* \cdots *^{\epsilon_r} x_r}\\ 
		&=&  \bold{e}_{x_i \cdot h}\\  
		&=&  \bold{e}_{x_i},
	\end{eqnarray*}
	and hence  $H_i$ lies in the centraliser of $\bold{e}_{x_i}$ in $\Env_n(X)$. Thus, we obtain the quandle $\sqcup_{i \in I} (\Env_n(X)/H_i,\bold{e}_{x_i})$. Since $\Env(X)$ acts transitively on connected components of $X$, the induced action of $\Env_n(X)$ is also transitive on connected components of  $X$. Further, since $H_i$ is the stabiliser of $x_{i}$ in $\Env_n(X)$, we have a bijection $$\phi:  \sqcup_{i \in I} (\Env_n(X)/ H_i,\bold{e}_{x_i} ) \rightarrow  X$$ induced by the orbit map $H_ig \mapsto x_i \cdot g$. It remains to check that $\phi$ is a quandle homomorphism. Indeed, for $u, v \in \Env_n(X)$, we have
	\begin{align*}
		\phi(H_iu \ast H_jv) & = \phi(H_i\bold{e}_{x_i}^{-1} u v^{-1} \bold{e}_{x_{j}} v)\\
		& =  x_i\cdot (\bold{e}_{x_i}^{-1} u v^{-1} \bold{e}_{x_{j}} v)\\
		& =  x_i\cdot (u v^{-1} \bold{e}_{x_{j}} v),~ \textrm{since $x_i\cdot \bold{e}_{x_i}^{-1}= x_i *^{-1} x_i=x_i$}\\
		& =  (((x_i\cdot u) \cdot v^{-1} )*x_j ) \cdot v\\
		& =  (x_i\cdot u) *(x_j  \cdot v)\\
		& = \phi(H_iu)\ast \phi(H_jv),
	\end{align*}
	which completes the proof.
\end{proof}	

As a consequence, we generalise a result of Hoste and Shanahan \cite[Theorem 3.2]{MR3704243} to arbitrary $n$-quandles. See also \cite[Proposition 3.1]{MR4447657} and \cite[Section 3.5]{Joyce1979} for the one way implication.

\begin{corollary}\label{quandle iff env finite}
Let $X$ be an $n$-quandle for some $n \ge 2$. Then $X$ is  finite  if and only if $\Env_n(X)$ is finite.
\end{corollary}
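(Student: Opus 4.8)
The plan is to deduce both implications from the homogeneous representation $\sqcup_{i\in I}(\Env_n(X)/H_i,\bold{e}_{x_i})\cong X$ supplied by Proposition \ref{finiteness of n-quandle}, where $\{x_i\mid i\in I\}$ indexes the connected components of $X$ and $H_i$ is the stabiliser of $x_i$ in $\Env_n(X)$. Under this identification each component is the coset space $\Env_n(X)/H_i$, so that $|X|=\sum_{i\in I}[\Env_n(X):H_i]$. Thus finiteness of $X$ is equivalent to the conjunction of two conditions: that there are finitely many components (that is, $|I|<\infty$), and that each index $[\Env_n(X):H_i]$ is finite. The first I would control through the abelianisation of $\Env_n(X)$, and the second through $\Env_n(X)$ itself.

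Suppose first that $\Env_n(X)$ is finite. Then every index $[\Env_n(X):H_i]$ is at most $|\Env_n(X)|$, so each component is finite, and it remains only to bound $|I|$. For this I would abelianise the defining presentation of $\Env_n(X)$: the relation $\bold{e}_{x\ast y}=\bold{e}_y^{-1}\bold{e}_x\bold{e}_y$ becomes $\overline{\bold{e}}_{x\ast y}=\overline{\bold{e}}_x$, which merely identifies the images of generators indexed by elements lying in one and the same component (since $x\ast y$ always lies in the component of $x$) and imposes no relation between distinct components, while $\bold{e}_x^n=1$ becomes $n\,\overline{\bold{e}}_x=0$. Hence the abelianisation is $\Env_n(X)^{\mathrm{ab}}\cong\bigoplus_{i\in I}\mathbb{Z}/n\mathbb{Z}$, which is finite precisely when $I$ is finite. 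As a quotient of the finite group $\Env_n(X)$ it must be finite, forcing $|I|<\infty$, and therefore $X$ is finite.

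For the converse, assume $X$ is finite, so that $\Env_n(X)$ is generated by the finite set $S=\{\bold{e}_x\mid x\in X\}$ of elements each of order dividing $n$. The crucial structural feature is that $S$ is closed under conjugation by the generators, since $\bold{e}_x^{-1}\bold{e}_y\bold{e}_x=\bold{e}_{y\ast x}\in S$ and $\bold{e}_x\bold{e}_y\bold{e}_x^{-1}=\bold{e}_{y\ast^{-1}x}\in S$; as $S$ generates the group, $S$ is invariant under conjugation by all of $\Env_n(X)$. Conjugation then gives an action $\Env_n(X)\to\operatorname{Sym}(S)$ whose kernel is exactly the set of elements commuting with every generator, namely the center $Z(\Env_n(X))$. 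Consequently $Z(\Env_n(X))$ has finite index, bounded by $|S|!$. By Schur's theorem, a group whose center has finite index has finite commutator subgroup, so $[\Env_n(X),\Env_n(X)]$ is finite; and the abelianisation computed above is finite here because $|I|\le|X|<\infty$. As $\Env_n(X)$ is an extension of its finite abelianisation by its finite commutator subgroup, it is finite.

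I expect the genuinely substantive points to be, in the first implication, the observation that abelianisation collapses each connected component to a single $\mathbb{Z}/n\mathbb{Z}$ summand while keeping distinct components independent, since this is precisely what converts the condition ``finitely many components'' into a statement visibly implied by finiteness of $\Env_n(X)$; and, in the second implication, the passage from a finite conjugation-closed generating set of torsion elements to finiteness of the whole group, where Schur's theorem is the essential group-theoretic input. The remaining verifications are routine.
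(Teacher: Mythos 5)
Your argument is correct, but it is organised quite differently from the paper's. The paper's proof is essentially two citations: the direction ``$X$ finite $\Rightarrow$ $\Env_n(X)$ finite'' is quoted from Hoste--Shanahan \cite[Theorem 3.2]{MR3704243}, and the converse is read off from the homogeneous representation of Proposition \ref{finiteness of n-quandle}. For the converse you follow the same route via Proposition \ref{finiteness of n-quandle}, but you add the observation that $\Env_n(X)^{\mathrm{ab}}\cong\bigoplus_{i\in I}\mathbb{Z}/n\mathbb{Z}$, which is needed to rule out infinitely many components; the paper leaves this point implicit, so your version is actually more complete on this half. For the forward direction you replace the citation by a self-contained group-theoretic argument: the conjugation-closed finite generating set $\{\bold{e}_x\mid x\in X\}$ forces $Z(\Env_n(X))$ to have finite index, Schur's theorem then gives a finite commutator subgroup, and the finite abelianisation finishes the job. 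This is a clean, elementary substitute for the external reference, and it makes transparent exactly where the hypothesis $\bold{e}_x^n=1$ enters (namely, in making the abelianisation finite --- for an ordinary quandle the abelianisation of $\Env(X)$ is free abelian on the components, which is why $\Env(X)$ of a finite quandle is never finite). The trade-off is only length: the paper's proof is two lines, while yours is a full page, but yours is independent of \cite{MR3704243} and of the topology-flavoured context in which that theorem is stated.
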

\begin{proof}
By \cite[Theorem 3.2]{MR3704243}, if $X$ is finite, then $\Env_n(X)$ is finite. The converse follows from Proposition \ref{finiteness of n-quandle}.
\end{proof}

\medskip


\section{Residual finiteness and fundamental $n$-quandles of links}\label{section 3}

\subsection{Residual finiteness and subquandle separability of quandles}
We begin by recalling the definition of a subgroup separable group.

\begin{definition}
A subset $S$ of a group $G$ is said to be {\it separable} in $G$ if for each $x \in G\setminus S$, there exists a finite group $F$ and a  group homomorphism $\phi: G \to F$ such that $\phi(x) \not\in \phi(S)$. If the singleton set $S$ consisting of only the identity element is separable, then $G$ is called {\it residually finite}. If each finitely generated subgroup of $G$ is separable, then $G$ is called {\it subgroup separable}. 
\end{definition}

Recall that, the {\it profinite topology} on a group $G$ has a basis consisting of right cosets of all finite index subgroups of $G$. By definition, every right coset of a finite index subgroup is closed in the profinite topology. An easy check shows that a subgroup $H$ of $G$ is separable in $G$ if and only if $H$ is closed in the profinite topology on $G$ \cite{MR2414358,MR1398726}. We will use this equivalent definition of subgroup separability to prove the following result.

\begin{prop}\label{finite index separable}
Let $H$ and $K$ be subgroups of $G$ such that $[G:K]$ is finite. Let $L=H \cap K$ such that $[H:L]$ is finite and $L$ is a separable subgroup of $K$. Then $H$ is a separable subgroup of $G$.
\end{prop}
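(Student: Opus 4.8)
The plan is to work entirely with the profinite-topological characterisation of separability recalled above: $H$ is separable in $G$ precisely when $H$ is closed in the profinite topology on $G$. The strategy has three movements. First I would transfer the hypothesis that $L$ is separable in $K$ into the statement that $L$ is closed in $G$; then I would use the finite index $[H:L]$ to upgrade closedness of $L$ to closedness of $H$.

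The first and most delicate movement is a comparison of topologies. Since $[G:K]$ is finite, I claim the profinite topology on $K$ coincides with the subspace topology that $K$ inherits from the profinite topology on $G$. For one inclusion, a basic open set $gN \cap K$, where $N$ has finite index in $G$, is either empty or a coset of the finite-index subgroup $N \cap K$ of $K$ (choose a point $k_0$ in it and check $gN \cap K = k_0(N \cap K)$), hence open in the profinite topology of $K$. For the reverse inclusion, a basic open set $kM$ with $M$ of finite index in $K$ satisfies $[G:M] = [G:K][K:M] < \infty$, so $M$ is itself a finite-index subgroup of $G$ and $kM$ is already open in $G$, hence open in the subspace topology. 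This equality of topologies is the crux of the argument, and it is exactly where the finiteness of $[G:K]$ is used.

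With the topologies identified, separability of $L$ in $K$ means $L$ is closed in the subspace topology on $K$, say $L = F \cap K$ with $F$ closed in $G$. Because $K$ has finite index, $K$ is open in the profinite topology of $G$, and its complement—a finite union of open cosets—is also open, so $K$ is closed in $G$. Hence $L = F \cap K$ is an intersection of two closed sets and is therefore closed in $G$.

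Finally I would use $[H:L] < \infty$. Writing $H = \bigsqcup_{i=1}^{m} L h_i$ as a finite disjoint union of right cosets, and observing that right multiplication by a fixed element is a homeomorphism of $G$ in its profinite topology (it sends a coset $gN$ to the coset $(gh)(h^{-1}Nh)$ of a conjugate, hence again finite-index, subgroup), each translate $L h_i$ is closed. A finite union of closed sets is closed, so $H$ is closed in the profinite topology on $G$; equivalently, $H$ is separable in $G$, as desired. I expect the comparison of the two profinite topologies in the second movement to be the main obstacle, since it is the one place the finite index of $K$ is genuinely exploited; the remaining steps are formal once $L$ is known to be closed in $G$.
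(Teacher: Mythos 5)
Your proof is correct and follows essentially the same route as the paper's: identify separability with closedness in the profinite topology, observe that finite index of $K$ makes the profinite topology of $K$ agree with the subspace topology from $G$ (so $L$ is closed in $G$), and then write $H$ as a finite union of closed cosets of $L$. The paper's version is terser and leaves the topology comparison implicit, but the substance is identical.
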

\begin{proof}
Since $L$ is a separable subgroup of $K$,  it is closed in the profinite topology on $K$. Since $[G:K]$ is finite, it follows that $K$ is closed in the profinite topology on $G$. Consequently, $L$ is closed in the profinite topology on $G$. Since $[H:L]$ is finite,  $H$ is a finite union of right cosets of $L$, and hence it is closed in the profinite topology on $G$. Thus, it follows that $H$ is a separable subgroup of $G$.
\end{proof}

As a consequence of Proposition \ref{finite index separable}, we recover the following well-known result.

\begin{corollary}\label{finite index res finite}
Let $G$ be a group admitting a residually finite subgroup of finite index. Then $G$ is residually finite.
\end{corollary}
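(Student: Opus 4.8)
The plan is to deduce Corollary \ref{finite index res finite} as a special case of Proposition \ref{finite index separable}. Suppose $G$ has a residually finite subgroup $K$ of finite index, say $[G:K]<\infty$. To prove that $G$ is residually finite, it suffices to show that the trivial subgroup $\{1\}$ is separable in $G$, equivalently that $\{1\}$ is closed in the profinite topology on $G$.

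The natural choice is to apply the proposition with $H=\{1\}$ and this given finite-index subgroup $K$. First I would set $L=H\cap K=\{1\}\cap K=\{1\}$. Now I need to verify the three hypotheses of Proposition \ref{finite index separable}: that $[G:K]$ is finite, which holds by assumption; that $[H:L]$ is finite, which is trivial since $H=L=\{1\}$ so the index is $1$; and that $L=\{1\}$ is a separable subgroup of $K$. This last point is exactly the assertion that $K$ is residually finite, since by the definition in the excerpt a group is residually finite precisely when its trivial subgroup is separable. Thus all hypotheses are met, and the proposition yields that $H=\{1\}$ is separable in $G$, which is the definition of $G$ being residually finite.

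The argument is essentially a matter of unwinding the definitions and choosing the right specialisation of the proposition, so there is no substantive obstacle; the only point requiring a moment's care is the identification of ``$K$ residually finite'' with ``$\{1\}$ separable in $K$,'' which is immediate from the stated definition. I would present the proof in one or two short sentences, explicitly indicating the substitution $H=\{1\}$ and noting that the hypotheses of Proposition \ref{finite index separable} are satisfied by the residual finiteness of $K$ together with the finiteness of $[G:K]$.
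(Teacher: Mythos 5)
Your proof is correct and is exactly the specialisation the paper intends: the corollary is stated as an immediate consequence of Proposition \ref{finite index separable} (with no written proof), obtained precisely by taking $H=\{1\}$, $K$ the given finite-index residually finite subgroup, and $L=H\cap K=\{1\}$, and identifying residual finiteness of $K$ with separability of $\{1\}$ in $K$.
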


In analogy with groups, we introduce the following definition for quandles.

\begin{definition}
A subset $S$ of a quandle $X$ is said to be {\it separable} in $X$  if for each $x \in X\setminus S$, there exists a finite quandle $F$ and a quandle homomorphism $\phi : X \rightarrow F$ such that $\phi(x) \notin \phi(S)$. If each singleton set is separable, then $X$ is called {\it residually finite}. If each finitely generated subquandle of $X$ is separable, then $X$ is called  {\it subquandle separable}.
\end{definition}

We note that residual finiteness of fundamental quandles of oriented links has been established recently in \cite{MR3981139, MR4075375}. 
\par

Let $G$ be a group and $H$ a finitely generated subgroup of $G$. The {\it generalised word problem} is the problem of deciding for an arbitrary element $w$ in $G$ whether or not $w$ lies in $H$. Let $X$ be a quandle and $Y$ its  finitely generated subquandle. We can define the generalised word problem for quandles as the problem of  deciding for an arbitrary element $w$ in $X$ whether or not $w$ lies in $Y$. The following is an analogue of the corresponding result for groups.

\begin{prop}
A finitely presented subquandle separable  quandle has the solvable generalised word problem.
\end{prop}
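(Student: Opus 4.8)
The plan is to mirror the classical group-theoretic argument, which shows that subgroup separability yields a solvable generalized word problem by combining a recursive enumeration of finite quotients with a recursive enumeration of words in the subgroup. First I would fix a finite presentation $X = \langle x_1, \ldots, x_s \mid r_1, \ldots, r_t \rangle$ of the quandle and a finite generating set $\{y_1, \ldots, y_m\}$ of the subquandle $Y$. Given an arbitrary element $w \in X$, the goal is to decide algorithmically whether $w \in Y$. The idea is to run two semi-decision procedures in parallel: one that will eventually halt if $w \in Y$, and one that will eventually halt if $w \notin Y$.

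\emph{Enumerating $Y$ from above.} Every element of $Y$ is, by definition, expressible as a left-associated product $y_{i_0} \ast^{\epsilon_1} y_{i_1} \ast^{\epsilon_2} \cdots \ast^{\epsilon_k} y_{i_k}$ in the generators of $Y$, using the normal form guaranteed by \cite[Lemma 4.4.7]{MR2634013}. I would enumerate all such finite words over $\{y_1, \ldots, y_m\}$ and $\{1,-1\}$ in some systematic order. For each such word, the word problem for $X$ is solvable because $X$ is finitely presented and residually finite (separability of singletons, which is implied by subquandle separability applied to the trivial subquandle, gives residual finiteness, and a finitely presented residually finite quandle has solvable word problem by the cited result \cite{MR3981139}); hence I can decide whether the enumerated word equals $w$ in $X$. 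This half-procedure halts with the answer ``$w \in Y$'' precisely when $w \in Y$.

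\emph{Separating $w$ from $Y$ from below.} If instead $w \notin Y$, then because $Y$ is finitely generated, it is a separable subquandle of $X$, so there exist a finite quandle $F$ and a homomorphism $\phi \colon X \to F$ with $\phi(w) \notin \phi(Y)$. I would enumerate all finite quandles $F$ together with all maps $\phi$ sending the generators $x_1, \ldots, x_s$ into $F$; for each candidate I can check whether $\phi$ respects the relations $r_1, \ldots, r_t$ (a finite check), hence whether it extends to a genuine homomorphism. For a valid $\phi$, I compute the finite set $\phi(Y)$ as the subquandle of $F$ generated by $\phi(y_1), \ldots, \phi(y_m)$, and I compute $\phi(w)$ by evaluating any expression for $w$ in $F$. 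If $\phi(w) \notin \phi(Y)$, I halt with the answer ``$w \notin Y$''. Separability guarantees this half-procedure eventually halts when $w \notin Y$.

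Running both enumerations in parallel, exactly one of them halts, and its outcome decides membership; this establishes solvability of the generalized word problem. The main obstacle, and the point that deserves care in the write-up, is verifying that each of the two procedures is genuinely effective: in the first, one must be sure the word problem in $X$ is decidable so that the equality tests ``enumerated word $= w$'' can be carried out (this is where finite presentability plus residual finiteness, and the cited solvability of the word problem, are essential); in the second, one must check that finite quandles and candidate homomorphisms can be effectively enumerated and that relation-checking and the computation of the generated subquandle $\phi(Y)$ are finite tasks, which they are since $F$ is finite. Once both effectiveness claims are in place, the parallel-search argument is routine.
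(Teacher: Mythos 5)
Your proof is correct and takes essentially the same approach as the paper: two parallel semi-decision procedures, one enumerating the elements of $Y$ to certify membership and one enumerating finite quandles and homomorphisms $\phi: X \to F$ to certify non-membership, with separability of the finitely generated subquandle $Y$ guaranteeing that the second procedure terminates when $w \notin Y$. The only cosmetic difference is that the paper's first procedure enumerates consequences of the relations directly (so that $w$ literally turns up in the list) rather than routing through residual finiteness and the solvable word problem to perform equality tests, but both versions are effective.
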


\begin{proof}
Let $X=\langle S \mid R \rangle$ be a finitely presented subquandle separable quandle and $Y$ be its finitely generated subquandle. Let $x$ be an element of $X$. We describe two procedures to determine whether $x$ is in $Y$ or not. The first procedure lists all the elements obtained from the generators of $Y$ using the relations in $R$ (and quandle axioms). If, at some stage,  $x$ turns up as one of these elements, then $x \in Y$.
\par
The second procedure lists all the finite quandles. Since $X$ is finitely generated, for each finite quandle $F$, the set $\Hom(X, F)$ of all quandle homomorphisms is finite. Now, for each homomorphism $\phi \in \Hom(X, F)$, we look for $\phi(x)$ and $\phi(Y)$ in $F$ and check whether or not $\phi(x) \in \phi(Y)$. If, at some stage, $\phi(x)\notin \phi(Y)$, then $x\notin Y$. Since $X$ is a finitely presented subquandle separable quandle and $Y$ is a finitely generated subquandle of $X$, one of the above procedures must stop in finite time.
\end{proof}

The following result from \cite[Proposition 3.4]{MR4075375} will be used later in proving our main result on  fundamental $n$-quandles of oriented links.

\begin{prop}\label{res-finite-quandle}
Let $G$ be a group, $\{x_i \mid i \in I \}$ be a finite set of elements of $G$, and $\{H_i \mid i \in I \}$ a finite set of subgroups of $G$ such that $H_i \le \C_G(x_i)$ for each $i$. If each $H_i$ is separable in $G$, then the quandle $\sqcup_{i \in I}(G/H_i,x_i)$ is residually finite.
\end{prop}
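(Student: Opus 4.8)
The plan is to verify residual finiteness directly from the definition: given two distinct elements of $X := \sqcup_{i \in I}(G/H_i, x_i)$, I will exhibit a finite quandle and a quandle homomorphism from $X$ into it that keeps the two elements apart. Every such homomorphism will be produced by pushing a finite quotient of the group $G$ through the very construction of Example \ref{more example quandle}(3). Two distinct elements of $X$ fall into exactly one of two types: either they lie in different components $G/H_i$ and $G/H_j$ with $i \neq j$, or they are distinct cosets $H_i x \neq H_i y$ inside a single component $G/H_i$.

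First I would set up the source of finite quandle quotients. Given any group homomorphism $\psi \colon G \to F$ with $F$ finite, write $\bar x_i = \psi(x_i)$ and $\bar H_i = \psi(H_i)$. Since $H_i \le \C_G(x_i)$, applying $\psi$ shows $\bar H_i \le \C_F(\bar x_i)$, so the finite disjoint union $F' := \sqcup_{i \in I}(F/\bar H_i, \bar x_i)$ is a legitimate quandle, where finiteness of $I$ is used to ensure $F'$ is finite. I then define $\Phi_\psi \colon X \to F'$ by $H_i x \mapsto \bar H_i\, \psi(x)$. This is well defined because $\psi(H_i) = \bar H_i$, and a short computation comparing the two coset operations
\[
H_i x \ast H_j y = H_i x_i^{-1} x y^{-1} x_j y
\qquad\text{and}\qquad
\bar H_i u \ast \bar H_j v = \bar H_i \bar x_i^{-1} u v^{-1} \bar x_j v
\]
shows that $\Phi_\psi$ is a quandle homomorphism.

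It then remains to choose $\psi$ appropriately in each case. Since $\Phi_\psi$ sends the component $G/H_i$ into the component $F/\bar H_i$ carrying the same label $i$, two elements lying in different components are automatically separated by $\Phi_\psi$ for \emph{any} choice of $\psi$ (for instance the trivial homomorphism, which already yields the finite trivial quandle on $I$). For a same-component pair, the condition $H_i x \neq H_i y$ is equivalent to $x y^{-1} \notin H_i$; invoking separability of $H_i$ in $G$, I obtain a finite group $F$ and a homomorphism $\psi \colon G \to F$ with $\psi(x y^{-1}) \notin \psi(H_i) = \bar H_i$, whence $\bar H_i \psi(x) \neq \bar H_i \psi(y)$ and $\Phi_\psi$ separates the pair.

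The only genuinely delicate point is this second case: one must correctly convert the coset inequality $H_i x \neq H_i y$ into the group-element statement $x y^{-1} \notin H_i$ before applying separability, and then read the conclusion back as an inequality of cosets in $F/\bar H_i$. Everything else — the homomorphism verification and the cross-component case — is formal, so I expect this translation, together with checking that $\Phi_\psi$ respects the quandle operation, to be the main points requiring care.
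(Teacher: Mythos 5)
Your proof is correct: the construction of the finite quandle $\sqcup_{i\in I}(F/\psi(H_i),\psi(x_i))$ from a finite quotient $\psi\colon G\to F$, the verification that $\Phi_\psi$ is a well-defined quandle homomorphism, and the two-case separation argument (trivial for distinct components, separability of $H_i$ applied to $xy^{-1}$ for a same-component pair) together give a complete argument. The paper itself does not prove Proposition \ref{res-finite-quandle} but quotes it from \cite[Proposition 3.4]{MR4075375}, and your argument is essentially the same as the one given there.
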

\medskip

\subsection{Residual finiteness of fundamental $n$-quandles of oriented links}

Let $L$ be an oriented link in $\mathbb{S}^3$ with components $K_1, K_2, \ldots, K_m$. Then, as in Example \ref{example fundamental quandle}, we can associate the {\it fundamental quandle} $Q(L)$ to the link $L$, which is constructed from a regular diagram $D$ of $L$ and admits the presentation
$$
Q(L)=\left\langle x_{1}, x_2, \ldots, x_{s} \mid r_{1}, r_2, \ldots, r_{t}\right\rangle,
$$
where each $x_i$ is an arc of $D$, and each relation $r_l$ is given by  $x_k*x_j=x_i$ or $x_k*^{-1}x_j=x_i$ as per the corresponding crossing in $D$. The preceding presentation yields the presentation of $\Env(Q(L))$, which is precisely the Wirtinger presentation of the link group $\pi_1(\mathbb{S}^{3} \setminus L)$, that is, $\Env(Q(L)) \cong \pi_1(\mathbb{S}^{3} \setminus L)$.
\par

A less sensitive, but presumably more tractable invariant of an oriented link $L$ is the {\it fundamental $n$-quandle} $Q_n(L)$ defined for each natural number $n \ge 2$ as the quandle with the presentation
$$Q_{n}(L)=\langle x_{1}, x_2, \ldots, x_{s} \mid r_{1}, r_2, \ldots, r_{t}, u_{1}, u_2, \ldots, u_{k}\rangle,$$
where each relation $u_{\ell}$ is of the form $x_{i} \ast^{n} x_{j}=x_i$ for distinct generators $x_{i}$ and $x_{j}$. It follows from \cite[Proposition 3.1]{DhanwaniSingh2024} that the additional relations $u_{1}, u_2, \ldots, u_{k}$ suffice to make $Q_{n}(L)$ an $n$-quandle. If $L$ is a link with more than one component, then both $Q(L)$ and $Q_n(L)$ are disconnected with one component $Q^i(L)$ and $Q^i_n(L)$, respectively, for each component $K_i$ of $L$.
\par 

Passing from the presentation of $Q_{n}(L)$ to the presentation for $\Env_{n}(Q_{n}(L))$, we see that $\Env_{n}\left(Q_{n}(L)\right)$ is a quotient of $\Env(Q(L))$. In fact, we may present $\Env_{n}\left(Q_{n}(L)\right)$ by adjoining the relations $x^{n}=1$ for each Wirtinger generator $x$ of the link group $\pi_{1}\left(\mathbb{S}^{3} \setminus L\right)\cong \Env(Q(L))$. While the fundamental quandle of a non-trivial knot, except the Hopf link, is always infinite, its corresponding fundamental $n$-quandle can be finite. In fact, it is known due to Hoste and Shanahan \cite[Theorem 3.1]{MR3704243} that, if $\widetilde{M}_{n}(L)$ is the $n$-fold cyclic branched cover of $\mathbb{S}^{3}$, branched over an  oriented  link $L$, then $Q_{n}(L)$ is finite if and only if $\pi_{1}(\widetilde{M}_{n}(L))$ is finite.
\par

\begin{prop}
If $L$ is an oriented link with any finite number of components and $n \ge 2$, then $\Env_n\left(Q_{n}(L)\right)$ is a residually finite group.
\end{prop}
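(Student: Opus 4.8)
The plan is to realise $\pi_1(\widetilde{M}_n(L))$ as a finite-index subgroup of $\Env_n(Q_n(L))$ and then to conclude via residual finiteness of closed $3$-manifold groups together with Corollary~\ref{finite index res finite}. To set this up, I would use the description recorded above, namely that $\Env_n(Q_n(L)) \cong \pi_1(\mathbb{S}^3\setminus L)/\llangle x^n \mid x \text{ a Wirtinger generator}\rrangle$. Since every Wirtinger generator is a meridian, this quotient is exactly the orbifold fundamental group $\pi_1^{\mathrm{orb}}(\mathcal{O}_n(L))$ of the orbifold $\mathcal{O}_n(L)$ obtained by giving $\mathbb{S}^3$ cone angle $2\pi/n$ along $L$. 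The total linking number homomorphism $\pi_1(\mathbb{S}^3\setminus L)\to\mathbb{Z}$ that sends each meridian to $1$ kills every $n$-th power of a meridian after reduction modulo $n$, and therefore descends to a surjection $\rho\colon \Env_n(Q_n(L)) \to \mathbb{Z}/n$.

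Next I would identify $\ker\rho$ with $\pi_1(\widetilde{M}_n(L))$ through orbifold covering theory. The cyclic branched cover $\widetilde{M}_n(L)\to\mathbb{S}^3$ carries a $\mathbb{Z}/n$-action, free away from the branch locus, whose quotient orbifold is precisely $\mathcal{O}_n(L)$; hence $\widetilde{M}_n(L)\to\mathcal{O}_n(L)$ is a regular orbifold covering of degree $n$ whose total space is a manifold. This produces a short exact sequence
\[
1 \longrightarrow \pi_1(\widetilde{M}_n(L)) \longrightarrow \Env_n(Q_n(L)) \xrightarrow{\ \rho\ } \mathbb{Z}/n \longrightarrow 1,
\]
so that $\pi_1(\widetilde{M}_n(L)) \cong \ker\rho$ is a subgroup of index $n$ in $\Env_n(Q_n(L))$.

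Finally, since $\widetilde{M}_n(L)$ is a closed orientable $3$-manifold, its fundamental group $\pi_1(\widetilde{M}_n(L))$ is residually finite by Hempel's theorem, a consequence of geometrisation; equivalently, this is the trivial-subgroup instance of Theorem~\ref{piM abelian subgroup separable}. Thus $\Env_n(Q_n(L))$ contains a residually finite subgroup of finite index, and Corollary~\ref{finite index res finite} yields that $\Env_n(Q_n(L))$ is residually finite. Note that the argument is uniform in the number of components of $L$, since $\rho$ is surjective for any link.

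The hardest step is the orbifold-covering identification $\ker\rho\cong\pi_1(\widetilde{M}_n(L))$: making precise the passage from the cyclic branched cover to the orbifold fundamental group, and extracting the displayed exact sequence, requires care and draws on standard but nontrivial $3$-orbifold theory. The genuinely substantive external input is the residual finiteness of closed $3$-manifold groups, which in turn rests on geometrisation; everything else is a structural assembly of the quotient presentation of $\Env_n(Q_n(L))$ and the finite-index argument.
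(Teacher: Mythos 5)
Your proof is correct and follows essentially the same route as the paper: both realise $\pi_1(\widetilde{M}_n(L))$ as the index-$n$ subgroup of $\Env_n(Q_n(L))$ of elements with exponent sum $\equiv 0 \pmod n$ (your $\ker\rho$ is the paper's $E_n^0$), invoke Hempel's residual finiteness of $3$-manifold groups, and finish with Corollary \ref{finite index res finite}. The only difference is that the paper simply cites Winker's Theorem 5.2.2 for the identification $\pi_1(\widetilde{M}_n(L)) \cong E_n^0$, whereas you re-derive it via orbifold covering theory.
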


\begin{proof}
In view of \cite[Theorem 5.2.2]{MR2634013}, for each $n \ge 2$, we have $$\pi_{1}\left( \widetilde{M}_{n}(L)\right) \cong 
E_n^0,$$ where $E_n^0$ is the subgroup of $\Env_n(Q_{n}(L))$ consisting of all elements whose total exponent sum equals to zero modulo $n$. Since fundamental groups of 3-manifolds are residually finite \cite{hempel}, it follows that $\pi_{1}(\widetilde{M}_{n}(L))$, and hence $E_n^0$ is residually finite. By  \cite[Section 3]{MR3704243}, the subgroup $E_n^0$ is of finite index in $\Env_n\left(Q_{n}(L)\right)$. Hence, by Corollary \ref{finite index res finite}, $\Env_n\left(Q_{n}(L)\right)$ is residually finite.
\end{proof}

Focusing specifically on knots, we can actually prove that the enveloping groups of their fundamental $n$-quandles are residually finite.

\begin{prop}
If $K$ is an oriented knot and $n \ge 2$, then $\Env\left(Q_{n}(K)\right) \cong \pi_{1}(\widetilde{M}_{n}(K))\rtimes \mathbb{Z}$. Moreover, $\Env\left(Q_{n}(K)\right)$ is a residually finite group. 
\end{prop}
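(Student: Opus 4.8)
The plan is to first pin down the group-theoretic structure of $\Env(Q_n(K))$ and then read off residual finiteness from it.

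Set $G=\Env(Q_n(K))$, generated by the meridians $e_{x_1},\dots,e_{x_s}$. Every defining relator of $G$ — both the Wirtinger relators and the relators coming from the $n$-quandle relations $x_i\ast^n x_j=x_i$ — has total exponent sum zero, so there is a homomorphism $\sigma\colon G\to\mathbb{Z}$ with $\sigma(e_{x_i})=1$, split by $1\mapsto e_{x_1}$. The first point I would prove is that $z:=e_{x_1}^{\,n}$ is central in $G$. Indeed, since $Q_n(K)$ is an $n$-quandle we have $a\ast^n b=a$ for all $a,b\in Q_n(K)$, which in $\Env(Q_n(K))$ reads $e_b^{-n}e_a e_b^{\,n}=e_a$; hence every $e_b^{\,n}$ is central, and because $K$ is a knot all meridians $e_a$ are conjugate, so the central elements $e_a^{\,n}$ all coincide with a single $z$, with $\sigma(z)=n$ and $\langle z\rangle\cong\mathbb{Z}$. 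As the normal closure of $\{e_b^{\,n}\}$ is just $\langle z\rangle$, the identity $\Env_n(X)\cong\Env(X)/\langle\langle e_y^{\,n}\rangle\rangle$ recalled above gives $G/\langle z\rangle\cong\Env_n(Q_n(K))$.

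Next I would identify $N:=\ker\sigma$. Writing $q\colon G\to\Env_n(Q_n(K))$ for the quotient map, one checks that $q$ restricts to an isomorphism of $N$ onto $E_n^0$, the subgroup of elements of exponent sum $\equiv0\pmod n$: the restriction is injective because $N\cap\ker q=N\cap\langle z\rangle=1$ (since $\sigma(z^k)=kn\neq0$ for $k\neq0$), and surjective because any $y\in E_n^0$ has a lift whose exponent sum is a multiple of $n$, which can be corrected by a power of $z\in\ker q$ to land in $N$. Since $\pi_1(\widetilde{M}_n(K))\cong E_n^0$ by \cite[Theorem 5.2.2]{MR2634013} (exactly as used in the previous proposition), this yields $N\cong\pi_1(\widetilde{M}_n(K))$, and combining with the splitting of $\sigma$ gives $G=N\rtimes\langle e_{x_1}\rangle\cong\pi_1(\widetilde{M}_n(K))\rtimes\mathbb{Z}$.

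For residual finiteness, note that $N\cong\pi_1(\widetilde{M}_n(K))$ is the fundamental group of the closed $3$-manifold $\widetilde{M}_n(K)$, hence finitely generated and residually finite \cite{hempel}, while $G/N\cong\mathbb{Z}$. Given $1\neq g\in G$: if $\sigma(g)\neq0$, composing $\sigma$ with $\mathbb{Z}\to\mathbb{Z}/m$ for suitable $m$ separates $g$ from the identity. If $\sigma(g)=0$, then $g\in N\setminus\{1\}$; using residual finiteness of $N$ I pick a finite-index subgroup $K\le N$ with $g\notin K$, and let $K_0$ be the intersection of all subgroups of $N$ of index $[N:K]$. As $N$ is finitely generated this is a finite intersection, so $K_0$ has finite index; it is characteristic in $N$, hence normal in $G$, and $K_0\subseteq K$ gives $g\notin K_0$. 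Now $G/K_0$ is an extension of $\mathbb{Z}$ by the finite group $N/K_0$, so it is virtually cyclic (it contains an infinite cyclic subgroup of finite index) and therefore residually finite by Corollary \ref{finite index res finite}; the nontrivial image of $g$ is then separated from the identity in a finite quotient. Hence $G$ is residually finite.

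The main obstacle is the structural first part: recognising that a meridian raised to the $n$-th power becomes central in $\Env(Q_n(K))$ — which genuinely uses the $n$-quandle identity inside the enveloping group together with the conjugacy of meridians for a knot — and correctly matching $\ker\sigma$ with the branched-cover subgroup $E_n^0$ rather than with $\Env_n(Q_n(K))$ itself. For the residual finiteness step the crucial feature is that the quotient $G/N$ is exactly $\mathbb{Z}$: this forces each $G/K_0$ to be virtually cyclic and thereby sidesteps the well-known failure of residual finiteness for general central-by-residually-finite extensions (Deligne's examples).
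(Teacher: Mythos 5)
Your proof is correct, and it reaches the same structural decomposition $\Env(Q_n(K))\cong\pi_1(\widetilde{M}_n(K))\rtimes\mathbb{Z}$ as the paper, but by a more self-contained route. The paper obtains the splitting by citing \cite[Corollary 4.2]{MR4447657}, which gives $\Env(Q_n(K))\cong[\Env_n(Q_n(K)),\Env_n(Q_n(K))]\rtimes\mathbb{Z}$ for the connected $n$-quandle $Q_n(K)$, and then identifies the commutator subgroup (which equals $E_n^0$ for a knot) with $\pi_1(\widetilde{M}_n(K))$ via Winker; you instead prove the splitting directly by observing that the $n$-quandle identity forces each $e_b^{\,n}$ to be central in $\Env(Q_n(K))$, that connectedness makes all these central elements equal to a single $z$ with $\langle\langle e_b^{\,n}\rangle\rangle=\langle z\rangle$, and that the exponent-sum retraction then splits off $\mathbb{Z}$ with kernel mapping isomorphically onto $E_n^0\cong\pi_1(\widetilde{M}_n(K))$. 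That computation is sound and essentially reproves the cited corollary in the case at hand. Likewise, for residual finiteness the paper simply invokes \cite[Theorem 7]{MR0310044} on semidirect products of finitely generated residually finite groups, whereas you reprove exactly the special case needed (finitely generated residually finite)-by-$\mathbb{Z}$ via characteristic cores and the virtual cyclicity of finite-by-$\mathbb{Z}$ groups; this is a correct proof of the black box, and your closing remark about why the $\mathbb{Z}$ quotient matters is apt. The only blemish is notational: you reuse $K$ both for the knot and for a finite-index subgroup of $N$, which should be renamed.
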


\begin{proof}
Since $K$ is a knot, the fundamental $n$-quandle $Q_n(K)$ is connected and we have $E_n^0 =[\Env_n(Q_{n}(K)),\Env_n(Q_{n}(K))]$. By \cite[Corollary 4.2]{MR4447657}, we have
	$$
	\Env(Q_n(K)) \cong [\Env_n(Q_{n}(K)),\Env_n(Q_{n}(K))] \rtimes \mathbb{Z}.
	$$
Further, by \cite[Remark 5.1.5, Theorem 5.2.2]{MR2634013}, we have
	$$
	[\Env_n(Q_{n}(K)),\Env_n(Q_{n}(K))] \cong \pi_{1}(\widetilde{M}_{n}(K))
	$$
and hence $\Env(Q_n(K)) \cong \pi_{1}(\widetilde{M}_{n}(K)) \rtimes \mathbb{Z}$. For the second assertion, recall from \cite[Theorem 7]{MR0310044} that  semi-direct products of finitely generated residually finite groups are residually finite. Since both $\pi_{1}(\widetilde{M}_{n}(K))$ and $\mathbb{Z}$ are residually finite,  it follows that $\Env\left(Q_{n}(K)\right)$ is residually finite. 
\end{proof}

\begin{theorem}\label{piM abelian subgroup separable}
If $L$ is an oriented link and $n \ge 2$, then $\pi_{1}(\widetilde{M}_{n}(L))$ is abelian subgroup separable.
\end{theorem}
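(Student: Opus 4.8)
The plan is to exploit the fact that $\widetilde{M}_{n}(L)$ is a closed orientable $3$-manifold and to deduce abelian subgroup separability of $\pi_{1}(\widetilde{M}_{n}(L))$ from the geometric decomposition of such manifolds. First I would invoke the Kneser--Milnor prime decomposition to write $\widetilde{M}_{n}(L)$ as a connected sum of prime $3$-manifolds, so that $\pi_{1}(\widetilde{M}_{n}(L))$ becomes a free product of the fundamental groups of the prime summands. For each prime summand I would then apply the torus (JSJ) decomposition together with Thurston's geometrisation theorem, expressing its fundamental group as the fundamental group of a graph of groups whose vertex groups are fundamental groups of geometric pieces (spherical, Euclidean, $\mathrm{Nil}$, $\mathrm{Sol}$, Seifert fibered, or hyperbolic) and whose edge groups are the peripheral $\mathbb{Z}^{2}$ subgroups carried by the decomposing tori.

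The second step is to record abelian subgroup separability of each geometric piece. Spherical pieces have finite fundamental group and are trivially separable; the Euclidean, $\mathrm{Nil}$, and $\mathrm{Sol}$ cases have virtually polycyclic fundamental groups, which are subgroup separable by Mal'cev's theorem; Seifert fibered pieces have subgroup separable fundamental groups by Scott's theorem on the LERF property of surface and Seifert groups; and the hyperbolic pieces, whether closed or finite-volume cusped, have LERF fundamental groups by the work of Agol and Wise. In each case finitely generated abelian subgroups are, in particular, separable.

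The final and most delicate step is the combination: I would show that abelian subgroup separability is inherited by the fundamental group of the whole graph of groups from its geometric vertex groups. Every finitely generated abelian subgroup of such a group is, up to conjugacy, either carried by a single vertex group or infinite cyclic, acting on the associated Bass--Serre tree; in either case I would separate it by controlling the double cosets of the edge ($\mathbb{Z}^{2}$) subgroups and using that the peripheral tori are separable and that the decomposition is efficient. The passage through the free products coming from the prime decomposition is handled analogously, since by the Kurosh subgroup theorem an abelian subgroup of a free product is conjugate into a factor or infinite cyclic. I expect this combination step --- verifying the requisite double-coset separability of the peripheral subgroups so that abelian subgroup separability survives the amalgamations over the JSJ tori --- to be the main obstacle, as it is precisely the point where the efficiency of the geometric decomposition and the separability of the gluing subgroups must be used in tandem.
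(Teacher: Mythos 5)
Your overall strategy (prime decomposition, then geometrisation of the irreducible summands, then separability of the geometric pieces) is the same as the paper's, but your proof has a genuine gap exactly where you yourself locate ``the main obstacle'': the step in which abelian subgroup separability is transferred from the geometric JSJ pieces to the fundamental group of the whole graph of groups is announced (``I would show\dots'', ``I expect\dots'') but never carried out. This is not a routine verification one can wave at: combining LERF or abelian-subgroup-separable vertex groups over $\mathbb{Z}^2$ edge groups requires, as you note, double coset separability and efficiency of the decomposition, and making that work for all closed irreducible orientable $3$-manifolds with nontrivial JSJ decomposition is precisely the content of Hamilton's theorem that fundamental groups of Haken $3$-manifolds are abelian subgroup separable \cite[Theorem 1]{MR1851085}. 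The paper avoids your combination step entirely by observing that if the torus decomposition is nontrivial then $N$ contains an embedded incompressible torus, hence is Haken, and then quoting Hamilton; the only cases it handles directly are the two where no gluing occurs ($N$ itself Seifert fibered, via Niblo's double coset separability, or $N$ hyperbolic, via LERF). Your sketch also glosses over a small imprecision in the reduction: a finitely generated abelian subgroup acting on the Bass--Serre tree with a hyperbolic element need not be infinite cyclic (it can be $\mathbb{Z}^2$ with an infinite cyclic subgroup fixing the axis pointwise inside edge stabilisers), so even the case analysis you propose for the combination step needs more care. The same remark applies, more mildly, to the free product step: the Kurosh argument you allude to is essentially \cite[Proposition 6]{MR1851085}, which again the paper simply cites. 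To close the gap you should either invoke Hamilton's results explicitly or supply a full proof of the combination step, which would amount to reproving a nontrivial theorem.
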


\begin{proof}
By the Prime Decomposition Theorem \cite[Theorem 1]{MR0142125}, every compact connected orientable 3-manifold without boundary which is not the 3-sphere is homeomorphic to a connected sum of prime 3-manifolds. Thus, we have
$\widetilde{M}_{n}(L)= N_1 \# N_2 \# \cdots \# N_q$, where each $N_i$ is a compact connected orientable prime 3-manifold without boundary. The van-Kampen Theorem gives $$\pi_{1}(\widetilde{M}_{n}(L)) \cong \pi_1(N_1) * \pi_1(N_2) * \cdots * \pi_1(N_q).$$ Recall that a 3-manifold is irreducible if every embedded 2-sphere bounds a 3-ball. It is known that, with the exception of 3-manifolds $\mathbb{S}^{3}$ and $\mathbb{S}^{1} \times \mathbb{S}^{2}$, an orientable manifold is prime if and only if it is irreducible \cite[Lemma 1]{MR0142125}. By \cite[Proposition 6]{MR1851085}, a free product of abelian subgroup separable groups is abelian subgroup separable. Thus, it suffices to prove that  $\pi_1(N)$ is abelian subgroup separable for each compact connected irreducible orientable 3-manifold $N$ without boundary. 
\par

By the Geometrisation Theorem \cite[Theorem 1.7.6]{MR3444187}, if $N$ is such a  3-manifold, then there exists a (possibly empty)
collection of disjointly embedded incompressible tori $T_1,\ldots,T_p$ in $N$ such that each component of $N$ cut along $T_1\cup \cdots \cup T_p$ is hyperbolic or Seifert fibered. 
\begin{itemize}
\item If $N$ is Seifert fibered, then by \cite[Corollary 5.1]{MR1154898}, $\pi_1(N)$ is double coset separable, and hence it is abelian subgroup separable.
\item If $N$ is hyperbolic, then $\pi_1(N)$ is subgroup separable by \cite[Corollary 4.2.3]{MR3444187}, and hence it is abelian subgroup separable.
\item If $N$ admits an incompressible torus, then it is Haken \cite[p.45, A.10]{MR3444187}. It follows from \cite[Theorem 1]{MR1851085} that $\pi_1(N)$ is abelian subgroup separable.
\end{itemize}
This completes the proof of the theorem.
\end{proof}

Let $L$ be an oriented link with components $K_1, K_2, \ldots, K_m$ and $n \ge 2$. For each $i$, let $m_i$ and $\ell_i$ be the fixed meridian and the longitude of the component $K_i$, respectively. By abuse of notation, we also denote by $m_i$ and $\ell_i$ their images in the quotient $\Env_n(Q_{n}(L))$.  In view of the isomorphism $\pi_{1}\left( \widetilde{M}_{n}(L)\right) \cong E_n^0$, we can further view each $\ell_i$ as an element of $\pi_{1}\left( \widetilde{M}_{n}(L)\right)$.

Proposition \ref{piM abelian subgroup separable} leads to the following result.

\begin{corollary}\label{l_i is separable}
Let $L$ be an oriented link with components $K_1, K_2, \ldots, K_m$ and $n \ge 2$. Then, the subgroup $\langle \ell_i \rangle$ generated by the fixed longitude $\ell_i$ of $K_i$ is subgroup separable in $\pi_{1}(\widetilde{M}_{n}(L))$ for each $i$.
\end{corollary}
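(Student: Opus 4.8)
The plan is to deduce this immediately from Theorem \ref{piM abelian subgroup separable} by observing that $\langle \ell_i \rangle$ is about as simple a subgroup as one could hope for: it is cyclic, and hence a finitely generated abelian subgroup of $\pi_{1}(\widetilde{M}_{n}(L))$. First I would recall, as noted in the paragraph preceding the statement, that via the isomorphism $\pi_{1}(\widetilde{M}_{n}(L)) \cong E_n^0$ the longitude $\ell_i$ is a genuine element of $\pi_{1}(\widetilde{M}_{n}(L))$; its image has total exponent sum zero modulo $n$, so it does land in $E_n^0$. Thus $\langle \ell_i \rangle$ is a well-defined subgroup of $\pi_{1}(\widetilde{M}_{n}(L))$, generated by a single element.

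With this in hand, the conclusion is purely formal. By Theorem \ref{piM abelian subgroup separable}, the group $\pi_{1}(\widetilde{M}_{n}(L))$ is abelian subgroup separable, meaning that every finitely generated abelian subgroup is separable (equivalently, closed in the profinite topology). Since $\langle \ell_i \rangle$ is cyclic, it is in particular finitely generated and abelian, so it is separable in $\pi_{1}(\widetilde{M}_{n}(L))$, which is exactly the assertion. The argument applies uniformly to each component index $i$.

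I do not expect any genuine obstacle here: all of the substantive work --- the appeal to the Prime Decomposition and Geometrisation theorems, together with the separability results for the Seifert fibered, hyperbolic, and Haken pieces --- has already been carried out inside Theorem \ref{piM abelian subgroup separable}. The only point requiring a moment's care is the bookkeeping that confirms $\ell_i$ really represents an element of $\pi_{1}(\widetilde{M}_{n}(L)) \cong E_n^0$, rather than merely of the larger group $\Env_n(Q_n(L))$, so that separability \emph{in} $\pi_{1}(\widetilde{M}_{n}(L))$ is the meaningful statement. This is precisely the identification recorded just before the corollary, and once it is invoked the result follows in a single line.
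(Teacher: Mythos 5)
Your proposal is correct and matches the paper's reasoning exactly: the paper states this corollary without proof as an immediate consequence of Theorem \ref{piM abelian subgroup separable}, since $\langle \ell_i \rangle$ is a cyclic, hence finitely generated abelian, subgroup of $\pi_{1}(\widetilde{M}_{n}(L)) \cong E_n^0$. Your added care about verifying that $\ell_i$ genuinely lies in $E_n^0$ is the same identification the paper records in the paragraph preceding the corollary.
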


\begin{corollary}\label{m_k,l_k separable}
Let $L$ be an oriented link with components $K_1, K_2, \ldots, K_m$ and $n \ge 2$. For each $i$, let $m_i$ and $\ell_i$ be the fixed meridian and the longitude of $K_i$, respectively.  Then,  $P_i=\langle m_i, \ell_i \rangle$ is a separable subgroup of $\Env_n(Q_n(L))$ for each $i$.
\end{corollary}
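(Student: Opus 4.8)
The plan is to derive this as a direct application of Proposition \ref{finite index separable}, taking $G = \Env_n(Q_n(L))$, $K = E_n^0$, and $H = P_i = \langle m_i, \ell_i\rangle$. Since $E_n^0$ has finite index in $\Env_n(Q_n(L))$ and is isomorphic to $\pi_1(\widetilde{M}_n(L))$, the hypotheses of Proposition \ref{finite index separable} reduce to two claims: that $L := P_i \cap E_n^0$ has finite index in $P_i$, and that $L$ is separable in $E_n^0 \cong \pi_1(\widetilde{M}_n(L))$. Both follow once we identify $L$ explicitly.

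First I would record the structure of $P_i$. Let $\epsilon : \Env_n(Q_n(L)) \to \mathbb{Z}/n\mathbb{Z}$ denote the total exponent sum homomorphism, so that $E_n^0 = \ker \epsilon$; then $\epsilon(m_i) = 1$, while $\epsilon(\ell_i) = 0$ since $\ell_i \in E_n^0 \cong \pi_1(\widetilde{M}_n(L))$ by the discussion preceding Corollary \ref{l_i is separable}. Because $\langle m_i, \ell_i\rangle$ is the peripheral (boundary-torus) subgroup of the link group $\pi_1(\mathbb{S}^3 \setminus L) \cong \Env(Q(L))$, the relation $[m_i, \ell_i] = 1$ holds there, and hence also in the quotient $\Env_n(Q_n(L))$. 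Moreover $m_i$ is conjugate to a Wirtinger generator, so $m_i^n = 1$ in $\Env_n(Q_n(L))$. Thus $P_i$ is abelian and every element has the form $m_i^a \ell_i^b$.

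With this in hand, computing $L$ is immediate: $\epsilon(m_i^a \ell_i^b) = a \bmod n$, so $m_i^a \ell_i^b \in E_n^0$ exactly when $m_i^a = 1$, giving $L = P_i \cap E_n^0 = \langle \ell_i\rangle$. The restriction $\epsilon|_{P_i}$ is surjective onto $\mathbb{Z}/n\mathbb{Z}$ with kernel $\langle \ell_i\rangle$, so $[P_i : L] = n$ is finite. By Corollary \ref{l_i is separable}, $L = \langle \ell_i\rangle$ is separable in $E_n^0 \cong \pi_1(\widetilde{M}_n(L))$. Applying Proposition \ref{finite index separable} with these choices then yields that $P_i$ is separable in $\Env_n(Q_n(L))$, as desired.

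The genuine content of the argument lives upstream, in Theorem \ref{piM abelian subgroup separable} and Corollary \ref{l_i is separable}, which supply separability of $\langle \ell_i\rangle$ inside $\pi_1(\widetilde{M}_n(L))$; granting those, the present corollary is essentially bookkeeping. The one point requiring care---the step I would double-check most carefully---is the clean identification $P_i \cap E_n^0 = \langle \ell_i\rangle$, which rests on two facts that must survive passage to the $n$-quotient: the commutativity $[m_i, \ell_i] = 1$ of the peripheral subgroup and the torsion relation $m_i^n = 1$. If either failed in $\Env_n(Q_n(L))$, the intersection could be strictly larger than $\langle \ell_i\rangle$ and the finite-index hypothesis of Proposition \ref{finite index separable} would have to be re-examined.
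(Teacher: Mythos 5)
Your proposal is correct and follows essentially the same route as the paper: identify $P_i \cap E_n^0 = \langle \ell_i\rangle$ as a finite-index subgroup of $P_i$, invoke Corollary \ref{l_i is separable} for its separability in $\pi_1(\widetilde{M}_n(L)) \cong E_n^0$, and conclude via Proposition \ref{finite index separable}. The only difference is that you spell out the intersection computation (peripheral commutativity, $m_i^n=1$, and the exponent-sum map) which the paper leaves implicit.
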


\begin{proof}
By  \cite[Section 3]{MR3704243}, the subgroup $\pi_{1}(\widetilde{M}_{n}(L))$ is of finite index in $\Env_n(Q_n(L))$. Since $\ell_i \in \pi_{1}\left( \widetilde{M}_{n}(L)\right) \cong E_n^0$, it follows that $P_i \cap \pi_{1}(\widetilde{M}_{n}(L))=\langle \ell_i \rangle$.  Also, we have $[P_i : \langle \ell_i \rangle ]=n$. Further, by Corollary \ref{l_i is separable}, $\langle \ell_i \rangle$ is a separable subgroup of $\pi_{1}\left( \widetilde{M}_{n}(L)\right)$. Hence, by Proposition \ref{finite index separable}, $P_i$ is a separable subgroup of $\Env_n(Q_n(L))$.
\end{proof}

We can now deduce the main result of this section.

\begin{theorem}\label{n quandles link res finite}
If $L$ is an oriented link and $n \ge 2$, then the fundamental $n$-quandle $Q_n(L)$ is residually finite.
\end{theorem}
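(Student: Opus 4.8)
The plan is to realise $Q_n(L)$ as a finite disjoint union of coset quandles of the group $\Env_n(Q_n(L))$ and then to invoke Proposition \ref{res-finite-quandle}. First I would apply Proposition \ref{finiteness of n-quandle} to the $n$-quandle $Q_n(L)$. The orbits of $Q_n(L)$ under the action of $\Env(Q_n(L))$ coincide with its connected components, and since $L$ has components $K_1, \dots, K_m$, there are precisely $m$ of them, namely $Q_n^1(L), \dots, Q_n^m(L)$; in particular the index set is finite. Choosing the representative $x_i$ of the $i$-th orbit to be the class of an arc of the diagram lying on $K_i$, the corresponding generator $\mathbf{e}_{x_i}$ is the meridian $m_i$, and Proposition \ref{finiteness of n-quandle} yields a quandle isomorphism
\begin{equation*}
Q_n(L) \;\cong\; \bigsqcup_{i=1}^{m}\big(\Env_n(Q_n(L))/H_i,\, m_i\big),
\end{equation*}
where $H_i$ is the stabiliser of $x_i$ in $\Env_n(Q_n(L))$ and satisfies $H_i \le \C_{\Env_n(Q_n(L))}(m_i)$.

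The key step is to identify each stabiliser $H_i$ with the peripheral subgroup $P_i = \langle m_i, \ell_i\rangle$. One inclusion is straightforward: $x_i \cdot m_i = x_i \ast x_i = x_i$ by the idempotency axiom, and the longitude $\ell_i$, being represented by a peripheral loop, also fixes $x_i$; hence $P_i \le H_i$. The reverse inclusion $H_i \le P_i$ is exactly the assertion that the peripheral subgroup is the \emph{entire} stabiliser of a meridianal arc. For the fundamental quandle this is the classical fact of Joyce and Matveev, and for $Q_n(L)$ it descends to the quotient $\Env_n(Q_n(L))$ via Winker's description of $Q_n(L)$ through the $n$-fold cyclic branched cover, together with the isomorphism $\pi_1(\widetilde{M}_n(L)) \cong E_n^0$. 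I expect this identification, and in particular the reverse inclusion in the quotient, to be the main obstacle, as it is the one place where genuine topological input is needed rather than formal quandle algebra; should $H_i$ merely contain $P_i$ with finite index, the argument still goes through, since a finite union of cosets of a separable subgroup is separable.

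Finally, the conclusion follows formally. By Corollary \ref{m_k,l_k separable}, each $P_i$ is a separable subgroup of $\Env_n(Q_n(L))$; this separability rests on Theorem \ref{piM abelian subgroup separable}, which is where the geometrisation-theoretic work was done. Since $P_i = \langle m_i, \ell_i\rangle$ is abelian and contains $m_i$, we have $H_i = P_i \le \C_{\Env_n(Q_n(L))}(m_i)$, and both families $\{m_i\}_{i=1}^{m}$ and $\{H_i\}_{i=1}^{m}$ are finite. Proposition \ref{res-finite-quandle} then shows that $\bigsqcup_{i=1}^{m}\big(\Env_n(Q_n(L))/H_i, m_i\big)$ is residually finite, and by the isomorphism above so is $Q_n(L)$.
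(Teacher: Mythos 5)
Your proposal is correct and follows essentially the same route as the paper: the paper likewise writes $Q_n(L) \cong \sqcup_{i=1}^m (\Env_n(Q_n(L))/P_i,m_i)$ with $P_i=\langle m_i,\ell_i\rangle$, citing Hoste--Shanahan's Theorem 1.1 (equivalently Proposition \ref{finiteness of n-quandle} plus the identification of the stabilisers with the peripheral subgroups), and then concludes via Corollary \ref{m_k,l_k separable} and Proposition \ref{res-finite-quandle}. The step you flag as the main obstacle --- that the stabiliser of a meridianal element is exactly $P_i$ --- is precisely what the cited Hoste--Shanahan result supplies, and your fallback observation (finite index over $P_i$ would suffice) is a sound extra safeguard.
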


\begin{proof}
Let $L$ be an oriented link with components $K_1, K_2, \ldots, K_m$. Let $m_i$ and $\ell_i$ be the fixed meridian and the longitude of $K_i$, respectively. Then, by \cite[Theorem 1.1]{MR3704243} or Proposition \ref{finiteness of n-quandle}, we can write 
$$Q_n(L) \cong \sqcup_{i=1}^m (\Env_n(Q_n(L))/P_i,m_i),$$ where $P_i=\langle m_i, \ell_i \rangle$. Corollary \ref{m_k,l_k separable} implies that each $P_i$ is a separable subgroup of $\Env_n(Q_n(L))$. The result now follows from Proposition \ref{res-finite-quandle}.
\end{proof}

By \cite[Theorem 5.11]{MR3981139}, every finitely presented residually finite quandle has the solvable word problem. Thus, the preceding theorem leads to the following corollary.

\begin{corollary}
If $L$ is an oriented link and $n \ge 2$, then the fundamental $n$-quandle $Q_n(L)$ has the solvable word problem.
\end{corollary}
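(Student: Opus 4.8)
The plan is to combine the residual finiteness established in Theorem \ref{n quandles link res finite} with the cited general principle \cite[Theorem 5.11]{MR3981139}, which asserts that every finitely presented residually finite quandle has the solvable word problem. The only point that requires verification is that $Q_n(L)$ is finitely presented; once this is in hand, the corollary follows formally.

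First I would recall, as in Example \ref{example fundamental quandle}, that a regular diagram $D$ of $L$ has finitely many arcs and finitely many crossings. This yields a presentation of $Q(L)$ with finitely many generators $x_1, \ldots, x_s$, one for each arc, and finitely many crossing relations $r_1, \ldots, r_t$, one for each crossing. Passing to the fundamental $n$-quandle adjoins only the finitely many relations $u_1, \ldots, u_k$ of the form $x_i \ast^n x_j = x_i$; by \cite[Proposition 3.1]{DhanwaniSingh2024} these finitely many relations already suffice to impose the $n$-quandle condition. Consequently $Q_n(L) = \langle x_1, \ldots, x_s \mid r_1, \ldots, r_t, u_1, \ldots, u_k \rangle$ is finitely presented.

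Next I would invoke Theorem \ref{n quandles link res finite}, which gives that $Q_n(L)$ is residually finite for every $n \ge 2$. Since $Q_n(L)$ is both finitely presented and residually finite, \cite[Theorem 5.11]{MR3981139} applies directly and yields the solvability of the word problem for $Q_n(L)$, completing the argument. I do not expect any substantive obstacle: the corollary is a formal consequence of the main theorem together with a cited result, and the sole verification needed—finite presentability—is automatic from the finiteness of any link diagram.
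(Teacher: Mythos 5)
Your proposal is correct and follows exactly the paper's route: the paper also deduces the corollary by combining Theorem \ref{n quandles link res finite} with \cite[Theorem 5.11]{MR3981139}, leaving the finite presentability of $Q_n(L)$ implicit in the presentation given in Section 3.2. Your explicit verification of finite presentability (via the finite diagram and \cite[Proposition 3.1]{DhanwaniSingh2024}) is a welcome but minor addition.
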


We conclude this section with the following natural problem.

\begin{problem}
Classify links in $\mathbb{S}^3$ whose fundamental quandles and fundamental $n$-quandles (for $n \ge 2$) are subquandle separable. As expected, the problem is intimately related to subgroup separability of link groups.
\end{problem}
\medskip


\section{Residual finiteness of general quandles}
In this section, we establish residual finiteness of some classes of  quandles.

\begin{definition}
Let $S$ be a non-empty set and $n \ge 2$.  A quandle $FQ_n(S)$ containing $S$ is called a {\it free $n$-quandle} on the set $S$, if given any map $\phi:S\to X$, where $X$ is an $n$-quandle, there is a unique quandle homomorphism $\bar{\phi}:FQ_n(S)\to X$ such that $\bar{\phi}|_S= \phi$.
\end{definition}

Consider the free product $G=\ast_{|S|} \, \mathbb{Z}_n$ of cyclic groups of order $n$, one for each element of $S$. It is known from \cite[Section 2.11, Corollary 2]{Joyce1979} that $FQ_n(S)$ is a subquandle of $\Conj(G)$ consisting of conjugates of the generators of $G$.

\begin{prop} Let $S$ be a non-empty set and $n \ge 2$. Then $FQ_n(S)$ is a residually finite quandle.
\end{prop}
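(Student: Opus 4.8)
The plan is to realize $FQ_n(S)$ as a subquandle of the conjugation quandle of a residually finite group and then transfer residual finiteness from the group to the quandle. As recalled just above, $FQ_n(S)$ sits inside $\Conj(G)$, where $G=\ast_{|S|}\,\mathbb{Z}_n$, as the set of conjugates of the generators of the free factors. So it suffices to show that $\Conj(G)$ is a residually finite quandle and that residual finiteness is inherited by subquandles.

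First I would establish that $G$ itself is residually finite. Each free factor $\mathbb{Z}_n$ is finite, hence residually finite, and a free product of residually finite groups is residually finite by Gruenberg's theorem. When $S$ is infinite, one reduces to the finitely generated situation: any nontrivial reduced word $w\in G$ involves only finitely many factors, say those indexed by $i_1,\dots,i_k$, and the canonical retraction of $G$ onto the free factor $\mathbb{Z}_n\ast\cdots\ast\mathbb{Z}_n$ ($k$ copies) that kills every other factor sends $w$ to itself, hence to a nontrivial element. Since a free product of finitely many finite groups is virtually free, and therefore residually finite, a finite quotient separating the image of $w$ from the identity pulls back to one separating $w$ from the identity in $G$.

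Next I would observe that residual finiteness passes from $G$ to $\Conj(G)$. Any group homomorphism $\phi\colon G\to F$ induces a quandle homomorphism $\Conj(G)\to\Conj(F)$, because $\phi(y^{-1}xy)=\phi(y)^{-1}\phi(x)\phi(y)$; and when $F$ is finite, $\Conj(F)$ is a finite quandle. Thus, given distinct $x,y\in G$, residual finiteness of $G$ applied to $xy^{-1}\neq 1$ yields a finite quotient separating $x$ and $y$, which then separates them in $\Conj(G)$. Hence $\Conj(G)$ is a residually finite quandle.

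Finally, residual finiteness descends to subquandles: if $Y\subseteq X$ and a homomorphism from $X$ to a finite quandle distinguishes two given elements of $Y$, its restriction to $Y$ does the same. Applying this to $FQ_n(S)\subseteq\Conj(G)$ completes the argument. The only step requiring genuine care is the infinite-$S$ case in the second paragraph, where one cannot invoke virtual freeness directly and must first retract onto a finitely generated free factor; the transfer to $\Conj(G)$ and the passage to the subquandle are purely formal.
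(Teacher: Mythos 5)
Your argument is correct and is essentially the paper's own proof: embed $FQ_n(S)$ into $\Conj(G)$ for $G=\ast_{|S|}\,\mathbb{Z}_n$, deduce residual finiteness of $G$ from Gruenberg's theorem on free products, transfer it to $\Conj(G)$, and pass to the subquandle. You simply spell out the details (the retraction handling infinite $S$, the functoriality of $\Conj$, and subquandle inheritance) that the paper leaves implicit.
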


\begin{proof}
Let $G=\ast_{|S|}\,\mathbb{Z}_n$ be the free product of cyclic groups of order $n$, one for each element of $S$. Since free products of residually finite groups are residually finite, it follows that $G$ is a residually finite group, and hence $\Conj(G)$ is a residually finite quandle. Consequently, $FQ_n(S)$ being a subquandle of $\Conj(G)$ is also residually finite. 
\end{proof}

Let $G$ be a group and $\alpha \in \Aut(G)$. Then the {\it twisted conjugation quandle} $\Conj(G,\alpha)$ is the set $G$ equipped with the quandle operation $$x*y=\alpha(y^{-1} x)y.$$ These structures appeared in Andruskiewitsch-Gra\~{n}a \cite[Section 1.3.7]{MR1994219} as twisted homogeneous crossed sets. We prefer calling them twisted conjugation quandles since $\Conj(G,\alpha) = \Conj(G)$ when $\alpha$ is the identity map.

\begin{prop}
Let $G$ be a finitely generated residually finite group and $\alpha \in \Aut(G)$. Then the twisted conjugation quandle $\Conj(G,\alpha)$ is residually finite.
\end{prop}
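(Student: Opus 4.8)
The plan is to exploit the finite generation and residual finiteness of $G$ to approximate the twisted conjugation quandle by finite twisted conjugation quandles obtained from finite quotients of $G$. The key observation is that the quandle operation $x * y = \alpha(y^{-1}x)y$ is built entirely from the group operations and the automorphism $\alpha$, so it behaves well under any group homomorphism that is compatible with $\alpha$. Concretely, if $N$ is a characteristic subgroup of $G$ of finite index, then $\alpha$ descends to an automorphism $\bar{\alpha}$ of the finite group $G/N$, and the natural projection $\pi: G \to G/N$ is a quandle homomorphism $\Conj(G,\alpha) \to \Conj(G/N, \bar{\alpha})$, since $\pi(\alpha(y^{-1}x)y) = \bar{\alpha}(\pi(y)^{-1}\pi(x))\pi(y)$.

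First I would make precise the claim that $\alpha$ acts on a cofinal family of finite-index characteristic subgroups. Since $G$ is finitely generated and residually finite, for each non-identity $g \in G$ there is a finite-index normal subgroup avoiding $g$; intersecting over the (finitely many) images of the generators under all automorphisms, or more cleanly passing to the characteristic core, one obtains a finite-index characteristic subgroup $N$ with $g \notin N$. The standard fact here is that a finitely generated group has only finitely many subgroups of each index, so the intersection of all subgroups of a fixed index is a finite-index characteristic subgroup; this supplies, for each $g \neq 1$, a finite-index \emph{characteristic} $N$ with $g \notin N$, and characteristicity guarantees $\alpha(N) = N$ so that $\bar{\alpha}$ is well defined on $G/N$.

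Given two distinct elements $x, y \in \Conj(G,\alpha)$ (equal as sets to $G$), separation reduces to separating the group element $xy^{-1} \neq 1$: I would choose a finite-index characteristic $N$ with $xy^{-1} \notin N$, form the finite quandle $F = \Conj(G/N, \bar{\alpha})$, and let $\phi = \pi: \Conj(G,\alpha) \to F$ be the induced quandle homomorphism. Then $\phi(x) = \pi(x) \neq \pi(y) = \phi(y)$ because $\pi(x)\pi(y)^{-1} = \pi(xy^{-1}) \neq 1$ in $G/N$, so $\phi(x) \notin \phi(\{y\})$. As $x \neq y$ were arbitrary, every singleton is separable and $\Conj(G,\alpha)$ is residually finite.

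The main obstacle, such as it is, lies in ensuring $\alpha$ genuinely descends to the finite quotient, and this is exactly what characteristicity of $N$ secures; the subtlety is that an arbitrary finite-index normal subgroup separating $xy^{-1}$ need not be $\alpha$-invariant, so the reduction to a \emph{characteristic} finite-index subgroup via the finitely-many-subgroups-of-bounded-index argument is the one genuinely necessary ingredient. Everything else is a routine verification that $\pi$ respects the twisted operation and that $F$ is a finite quandle.
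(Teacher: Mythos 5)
Your proof is correct, but it takes a genuinely different route from the paper. The paper's argument is an embedding argument: it invokes Akita's result that $\Conj(G,\alpha)$ embeds as a subquandle of $\Conj(G\rtimes_\alpha\mathbb{Z})$, notes that $G\rtimes_\alpha\mathbb{Z}$ is residually finite because a split extension of a residually finite group by a finitely generated residually finite group is residually finite (Mal'cev/Miller), and concludes since subquandles of residually finite conjugation quandles are residually finite. You instead argue directly: to separate $x\neq y$ you separate $xy^{-1}\neq 1$ from the identity by a finite-index normal subgroup, pass to the intersection of all subgroups of that index --- which is characteristic and still of finite index because a finitely generated group has only finitely many subgroups of each index --- and observe that $\alpha$ then descends to the finite quotient, making the projection a homomorphism of twisted conjugation quandles. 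Both arguments use finite generation in an essential (and different) place: the paper needs it for the residual finiteness of the semidirect product, you need it for the finiteness of the characteristic core. Your version is more elementary and self-contained, requiring no embedding theorem and no facts about extensions; the paper's is shorter given the cited machinery and fits the surrounding framework of deducing quandle results from group-theoretic ones. One cosmetic point: your first suggestion of ``intersecting over the images of the generators under all automorphisms'' is not the right mechanism (there may be infinitely many automorphisms); the characteristic-core construction you give immediately afterwards is the correct and sufficient one, so you should simply lead with that.
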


\begin{proof}
By \cite{MR4564617}, there is an embedding of quandles $\Conj(G, \alpha)\hookrightarrow \Conj(G \rtimes_\alpha \mathbb{Z})$, where the action of $\mathbb{Z}$ on $G$ is defined via the automorphism $\alpha$. By \cite[Theorem 7, p.29]{MR0310044}, a split extension of a residually finite group by a  finitely generated residually finite group is again residually finite. Thus, $G \rtimes_{\alpha} \mathbb{Z}$ is a residually finite group, and hence $\Conj(G \rtimes_{\alpha} \mathbb{Z})$ is a residually finite quandle. 
Consequently, $\Conj(G,\alpha)$ being a subquandle of  $\Conj(G \rtimes_{\alpha} \mathbb{Z})$ is also residually finite. 
\end{proof}

A quandle $X$ is said to be {\it abelian} if $\Inn(X)$ is an abelian group. Equivalently, $X$ is abelian if $(x*y)*z=(x*z)*y$ for all $x, y, z \in X$.  In \cite{MR4116819}, a description of all finite quandles with abelian enveloping groups has been given, and it has been proved that any such quandle must be abelian.

\begin{prop} \label{Env(X) rf for abelian}
If $X$ is a finitely generated abelian quandle, then $\Env(X)$ is a residually finite group.
\end{prop}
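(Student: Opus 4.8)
The plan is to show that $\Env(X)$ is a finitely generated nilpotent group of class at most two, and then invoke the classical fact that every finitely generated nilpotent group is residually finite. The whole point will be that the abelian identity for the quandle is exactly what forces the commutators of generators of $\Env(X)$ to be central.

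First I would record that $\Env(X)$ is finitely generated. If $X=\langle g_1,\dots,g_k\rangle$ as a quandle, then every $x\in X$ is a left-associated product of the $g_i$ under $\ast$ and $\ast^{-1}$, and applying the defining relations $e_{u\ast v}=e_v^{-1}e_ue_v$ and $e_{u\ast^{-1}v}=e_ve_ue_v^{-1}$ repeatedly rewrites each generator $e_x$ of $\Env(X)$ as a word in $e_{g_1},\dots,e_{g_k}$. Since $\Env(X)$ is generated by all the $e_x$, it follows that $\Env(X)=\langle e_{g_1},\dots,e_{g_k}\rangle$, and in particular $\Env(X)$ is finitely generated.

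The heart of the argument is to translate the abelian identity $(x\ast y)\ast z=(x\ast z)\ast y$ into a statement about commutators. Iterating the defining relation yields $e_{(x\ast y)\ast z}=(e_ye_z)^{-1}e_x(e_ye_z)$ and $e_{(x\ast z)\ast y}=(e_ze_y)^{-1}e_x(e_ze_y)$, so the abelian identity says that conjugation by $e_ye_z$ and by $e_ze_y$ agree on every $e_x$. Equivalently, each $e_x$ commutes with $e_ye_ze_y^{-1}e_z^{-1}=[e_y,e_z]$ for all $y,z$. As the $e_x$ generate $\Env(X)$, every commutator $[e_y,e_z]$ is therefore central. Since the commutator subgroup of a group is the normal closure of the commutators of a generating set, and central elements coincide with their conjugates, $[\Env(X),\Env(X)]$ is generated by these central commutators and hence is contained in $\Z(\Env(X))$. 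Thus $\Env(X)$ is nilpotent of class at most two.

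Combining the two observations, $\Env(X)$ is a finitely generated nilpotent group, and residual finiteness follows from the standard theorem that such groups are residually finite. I expect the only genuine content to lie in the middle step, namely recognizing that the defining relations of $\Env(X)$ convert the quandle's abelian identity precisely into the centrality of the commutators $[e_y,e_z]$; the finite generation and the concluding citation are routine.
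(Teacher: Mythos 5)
Your proof is correct and follows essentially the same line as the paper: both arguments reduce to showing that $[\Env(X),\Env(X)]$ is central in the finitely generated group $\Env(X)$ --- the paper via the central extension $1\to \ker(\psi_X)\to \Env(X)\to \Inn(X)\to 1$ with $\Inn(X)$ abelian, you by translating the identity $(x*y)*z=(x*z)*y$ directly into the centrality of the commutators $[e_y,e_z]$ --- and then cite a classical residual finiteness theorem. The only difference is cosmetic: you record the sharper conclusion that $\Env(X)$ is nilpotent of class at most two and invoke residual finiteness of finitely generated nilpotent groups, whereas the paper settles for ``finitely generated metabelian'' and invokes Hall's theorem.
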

\begin{proof}
We have the central extension
		$$1 \rightarrow \ker(\psi_X) \rightarrow \Env(X) \xrightarrow{\psi_X} \Inn(X) \rightarrow 1,$$
where $\psi_X(e_x)=S_x$ for each $x \in X$. Since $X$ is finitely generated, $\Env(X)$ is finitely generated. Further, since $X$ is abelian, $\Inn(X)$ is an abelian group. Thus, $\Env(X)$ is a finitely generated metabelian group. It follows from the well-known result of Hall \cite[Theorem 1]{MR0110750} that $\Env(X)$ is residually finite.
\end{proof}

\begin{prop}\label{X is rf for abelian}
A finitely generated abelian quandle is residually finite.
\end{prop}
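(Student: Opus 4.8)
The plan is to exhibit $X$ as a subquandle of the conjugation quandle of a residually finite group, exactly as in the two preceding propositions. Concretely, consider the canonical map $\eta\colon X\to\Conj(\Env(X))$, $\eta(x)=e_x$. The defining relations $e_{x\ast y}=e_y^{-1}e_xe_y$ of $\Env(X)$ say precisely that $\eta(x\ast y)=\eta(x)\ast\eta(y)$, so $\eta$ is a quandle homomorphism. If I can show that $\eta$ is injective, the proposition follows immediately: by Proposition \ref{Env(X) rf for abelian} the group $\Env(X)$ is residually finite, so $\Conj(\Env(X))$ is a residually finite quandle (a finite quotient of $\Env(X)$ separating two elements induces, through the functor $\Conj$, a finite-quandle quotient separating them), and a subquandle of a residually finite quandle is residually finite.

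Hence everything reduces to the injectivity of $\eta$, equivalently to the statement that $X$ embeds into a conjugation quandle. The cross-component case is automatic and needs no hypothesis: abelianising the presentation of $\Env(X)$ turns each relation into $e_{x\ast y}=e_x$, so $\Env(X)^{\mathrm{ab}}$ is free abelian on the connected components of $X$, and two elements in different components already have distinct images there; as $X$ is finitely generated it has only finitely many components, so this is harmless. The substance is injectivity within a single component. Applying the homogeneous representation of Proposition \ref{finiteness of n-quandle} (in its version for $\Env(X)$ acting on $X$), I would write such a component as $(\Env(X)/H_i,e_{x_i})$ with $H_i=\Stab_{\Env(X)}(x_i)\le\C_{\Env(X)}(e_{x_i})$; since $\eta(x_i\cdot g)=g^{-1}e_{x_i}g$, a short computation shows that $\eta$ is injective on this orbit if and only if $H_i=\C_{\Env(X)}(e_{x_i})$.

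The main obstacle is therefore the equality $\Stab_{\Env(X)}(x_i)=\C_{\Env(X)}(e_{x_i})$: an element of $\Env(X)$ that commutes with $e_{x_i}$ must in fact fix $x_i$ under the action. This is exactly where the abelian hypothesis should be used, since $\Inn(X)$ being abelian forces the $\Env(X)$-action on $X$ to factor through an abelian group and thereby constrains the stabilisers. I expect to settle it either by producing, for an arbitrary abelian quandle, an explicit faithful homomorphism into a conjugation quandle — generalising the embedding of a Takasaki quandle $T(\mathbb{Z}_n)$ into $\Conj(D_n)$ by sending elements to reflections, which already witnesses $e_x\ne e_{x'}$ for $x\ne x'$ — or by a direct normal-form analysis in $\Env(X)$. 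A cleaner alternative that sidesteps injectivity entirely is to feed the isomorphism $X\cong\sqcup_i(\Env(X)/H_i,e_{x_i})$ into Proposition \ref{res-finite-quandle}; this requires only that each $H_i$ be separable in $\Env(X)$, which holds whenever $\Env(X)$ is subgroup separable. In particular, if the central extension produced in the proof of Proposition \ref{Env(X) rf for abelian} makes $\Env(X)$ finitely generated nilpotent, then Mal'cev's theorem that finitely generated nilpotent groups are subgroup separable supplies the required separability. Either way, the decisive point is to pin down enough of the group-theoretic structure of $\Env(X)$.
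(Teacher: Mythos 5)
Your headline route---embedding $X$ into $\Conj(\Env(X))$ via $x\mapsto e_x$---has a genuine gap: the injectivity of this map is precisely the hard point, and you do not establish it. As you yourself compute, injectivity on a single orbit is equivalent to the equality $\Stab_{\Env(X)}(x_i)=\C_{\Env(X)}(e_{x_i})$, which is a substantive claim about the structure of $\Env(X)$ that does not follow formally from $\Inn(X)$ being abelian; the natural map from a quandle to the conjugation quandle of its enveloping group fails to be injective for general quandles, so some genuine argument is required and ``I expect to settle it'' is not a proof. The paper avoids this issue entirely: it never tries to embed $X$ into a conjugation quandle.

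Your ``cleaner alternative'' at the end, however, is essentially correct and is very close to the paper's actual argument. The paper writes $X\cong\sqcup_{i\in I}(\Inn(X)/H_i,S_{x_i})$ using Joyce's homogeneous representation with respect to the \emph{inner} automorphism group (the analogue of Proposition \ref{finiteness of n-quandle}), notes that $\Inn(X)$ is a finitely generated abelian group so that \emph{every} subgroup $H_i$ is separable, and concludes by Proposition \ref{res-finite-quandle}. Your version replaces $\Inn(X)$ by $\Env(X)$: the decomposition $X\cong\sqcup_i(\Env(X)/H_i,e_{x_i})$ holds by the same orbit-map computation, the central extension $1\to\ker(\psi_X)\to\Env(X)\to\Inn(X)\to 1$ with abelian quotient shows $\Env(X)$ is finitely generated nilpotent of class at most $2$, and Mal'cev's theorem gives subgroup separability (every subgroup of a finitely generated nilpotent group is finitely generated, so all the $H_i$ are separable). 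This works, but it is heavier than necessary: working with $\Inn(X)$ directly, abelianness makes separability of the stabilisers immediate, with no appeal to Mal'cev. If you rewrite your proof, lead with the coset-decomposition-plus-Proposition-\ref{res-finite-quandle} argument and drop the embedding into $\Conj(\Env(X))$, which you cannot justify.
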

\begin{proof}
Let $X$ be a finitely generated abelian quandle. Let $\{x_i \mid i \in I\}$ be a finite set of representatives of orbits of $X$ under the action of $\Inn(X)$, and let $H_i$ be the stabiliser of $x_i$ under this action. Since $H_i \le \C_{\Inn(X)}(S_{x_i})$, arguments in the proof of proposition in \cite[Section 2.4]{Joyce1979} shows that  $$X \cong \sqcup_{i \in I} (\Inn(X)/H_i,S_{x_i})$$ as quandles. Since $\Inn(X)$ is abelian, each of its subgroup, in particular, each $H_i$ is separable in $\Inn(X)$. Thus, by Proposition \ref{res-finite-quandle}, $X$ is a residually finite quandle.
\end{proof}
\medskip


\section{Subquandle separability of general quandles}
In this section, we explore subquandle separability of some classes of quandles.

\begin{prop}\label{finite-sub-separable}
If $X$ is a residually finite quandle, then every finite subquandle of $X$ is separable.
\end{prop}
\begin{proof}
Let $S=\{x_1, x_2, \ldots, x_k\}$ be a finite subquandle of $X$. For each $z \in X \setminus S$ and each $i \in \{1,2, \ldots, k\},$ there exists a finite quandle $Y_i$ and a quandle homomorphism $\phi_i:X \rightarrow Y_i$ such that $\phi_i(z) \neq \phi_i(x_i)$. Define $\Phi:X \rightarrow \prod_{i=1}^k Y_i$ by $\Phi(x)= (\phi_1(x), \phi_2(x), \ldots, \phi_k(x))$. Then, we have $\Phi(z) \notin \Phi(S)$, which is desired.
\end{proof}

By \cite[Proposition 3.2]{MR3981139}, a trivial quandle is residually finite. This together with the preceding proposition yields the following result.

\begin{corollary}
A trivial quandle is subquandle separable.
\end{corollary}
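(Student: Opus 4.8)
The plan is to combine the two facts assembled immediately before the corollary: that a trivial quandle is residually finite (by \cite[Proposition 3.2]{MR3981139}), and that in a residually finite quandle every \emph{finite} subquandle is separable (Proposition \ref{finite-sub-separable}). The only genuine gap is that subquandle separability is a statement about \emph{finitely generated} subquandles rather than merely finite ones, so I would first argue that in a trivial quandle these two notions coincide.

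To that end, recall that in a trivial quandle $X$ the operation satisfies $x \ast y = x$, and dually $x \ast^{-1} y = x$, for all $x, y \in X$. Consequently any left-associated product $x_0 \ast^{\epsilon_1} x_1 \ast^{\epsilon_2} \cdots \ast^{\epsilon_n} x_n$ collapses to $x_0$. Hence the subquandle generated by a subset $S \subseteq X$ is precisely $S$ itself, and in particular every finitely generated subquandle of $X$ is finite. This is the one observation doing all the work.

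With this in hand the argument closes at once. Let $Y$ be a finitely generated subquandle of a trivial quandle $X$. By the preceding paragraph $Y$ is finite, and since $X$ is residually finite by \cite[Proposition 3.2]{MR3981139}, Proposition \ref{finite-sub-separable} shows that $Y$ is separable in $X$. As $Y$ was an arbitrary finitely generated subquandle, $X$ is subquandle separable by definition.

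I do not expect any real obstacle here: the entire content is the elementary remark that the trivial operation generates no new elements, so that \enquote{finitely generated} reduces to \enquote{finite,} after which the two cited results apply verbatim.
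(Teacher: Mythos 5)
Your proof is correct and follows the same route as the paper: the paper simply combines the residual finiteness of trivial quandles from \cite[Proposition 3.2]{MR3981139} with Proposition \ref{finite-sub-separable}. The only difference is that you make explicit the (elementary but genuinely needed) observation that finitely generated subquandles of a trivial quandle are finite, which the paper leaves implicit.
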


If $G$ is a group and $\alpha \in \Aut(G)$, then the  binary operation $$x*y=\alpha(xy^{-1})y$$ gives a quandle structure on $G$, denoted by $\Alex(G, \alpha)$. These quandles are called  {\it generalized Alexander quandles}. Note that, $\Alex(G, \alpha)$ is a special case of Example \ref{more example quandle}(2) when $H$ is the trivial subgroup. If $G$ is abelian, then $\Alex(G,\alpha)$ is precisely the twisted conjugation quandle $\Conj(G,\alpha)$. The following results generalise \cite[Proposition 4.1 and Proposition 4.2]{MR3981139}.

\begin{prop}\label{alex-subseparable}
Let $G$ be a subgroup separable group, $H$ a finitely generated subgroup of $G$ and $\alpha$ an inner automorphism of $G$ such that $\alpha(H) =H.$ Then the following assertions hold:
\begin{enumerate}
\item $\Alex(H,\alpha|_H)$ is a  separable subquandle of $\Alex(G,\alpha)$.
\item $\Conj(H,\alpha|_H)$ is a  separable subquandle of $\Conj(G,\alpha)$.
\end{enumerate}
\end{prop}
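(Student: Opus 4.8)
The plan is to separate the finitely generated subquandle $\Alex(H,\alpha|_H)$ inside $\Alex(G,\alpha)$ by pulling back a separating quandle homomorphism from a separating \emph{group} homomorphism supplied by the subgroup separability of $G$. The key observation is that $\Alex(G,\alpha)$ and $\Alex(H,\alpha|_H)$ have the \emph{same} underlying sets as $G$ and $H$, so membership in the subquandle is literally membership in the subgroup $H$; thus quandle separability of $H$ as a subquandle is equivalent to a statement about separating the subset $H$ from points of $G\setminus H$, and this is exactly what subgroup separability of $G$ provides. First I would record the elementary but crucial compatibility: if $\beta\colon G\to G'$ is a group homomorphism that commutes with the relevant automorphisms (that is, $\beta\circ\alpha = \alpha'\circ\beta$ for the induced $\alpha'$), then $\beta$ is automatically a quandle homomorphism $\Alex(G,\alpha)\to\Alex(G',\alpha')$, since $\beta(x\ast y)=\beta(\alpha(xy^{-1})y)=\alpha'(\beta(x)\beta(y)^{-1})\beta(y)=\beta(x)\ast\beta(y)$.

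The main structural point, and the reason the hypothesis demands $\alpha$ be \emph{inner}, is the following. Write $\alpha = \iota_g$ for some $g\in G$ (conjugation by $g$). Since $H$ is finitely generated and $G$ is subgroup separable, for any $z\in G\setminus H$ there is a finite group $F$ and a homomorphism $\phi\colon G\to F$ with $\phi(z)\notin\phi(H)$. The obstacle is that $\phi$ need not intertwine $\alpha$ with any automorphism of $F$ in a way that makes it a quandle map onto a \emph{finite} target. I would resolve this by replacing the naive target $F$ with a finite twisted conjugation (respectively generalized Alexander) quandle built from $F$ together with the image of $g$: set $\alpha_F := \iota_{\phi(g)}\in\Aut(F)$, which is inner in $F$ and hence an honest automorphism, and observe that $\phi\circ\alpha = \iota_g$ followed by $\phi$ equals $\iota_{\phi(g)}\circ\phi = \alpha_F\circ\phi$, so $\phi$ intertwines $\alpha$ with $\alpha_F$. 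By the compatibility from the first paragraph, $\phi\colon\Alex(G,\alpha)\to\Alex(F,\alpha_F)$ is a quandle homomorphism, and $\Alex(F,\alpha_F)$ is a finite quandle. Since $\phi(z)\notin\phi(H)=\phi\big(\Alex(H,\alpha|_H)\big)$ as subsets of $F$, this exhibits $\Alex(H,\alpha|_H)$ as separable in $\Alex(G,\alpha)$, proving part (1).

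For part (2), I would run the identical argument with the conjugation quandle in place of the generalized Alexander quandle: the map $x\mapsto\phi(x)$ is again a quandle homomorphism $\Conj(G,\alpha)\to\Conj(F,\alpha_F)$ because the twisted conjugation operation $x\ast y=\alpha(y^{-1}x)y$ is preserved by any $\alpha$-intertwining group homomorphism, exactly as checked above. Here one should note that $\Conj(H,\alpha|_H)$ is a genuine subquandle of $\Conj(G,\alpha)$ closed under the operation precisely because $\alpha(H)=H$, and the finitely generated hypothesis on $H$ is what licenses the use of subgroup separability of $G$ to separate $z$ from $H$. The step I expect to be the main obstacle is verifying that the induced automorphism on the finite quotient is well-defined and inner, i.e.\ that innerness of $\alpha$ passes to a \emph{canonical} inner automorphism $\alpha_F$ of the finite target compatible with $\phi$; this is where the hypothesis that $\alpha$ is inner (rather than an arbitrary automorphism fixing $H$ setwise) is used in an essential way, since for a general $\alpha$ there need be no corresponding automorphism of the finite quotient $F$ at all.
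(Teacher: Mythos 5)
Your proposal is correct and follows essentially the same route as the paper's proof: use subgroup separability of $G$ to obtain a finite group $F$ and a homomorphism $\phi$ separating the given element from $H$, then equip $F$ with the inner automorphism induced by $\phi(g)$ so that $\phi$ becomes a quandle homomorphism onto a finite generalized Alexander (respectively twisted conjugation) quandle. The only difference is that you spell out the intertwining computation $\phi\circ\alpha=\alpha_F\circ\phi$ and the resulting quandle-homomorphism check, which the paper leaves implicit.
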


\begin{proof}
Let $\alpha$ be the inner automorphism induced by $g \in G$, and let $x \in \Alex(G, \alpha) \setminus \Alex(H, \alpha|_H)$.  By subgroup separability of $G$, there exists a finite group $F$ and a group homomorphism $\phi:G \rightarrow F$ such that $\phi(x) \notin \phi(H)$. Let  $\beta$ be the inner automorphism of $F$ induced by $\phi(g)$. It follows that $\phi$ viewed as a map $\Alex(G, \alpha) \rightarrow \Alex(F, \beta)$  is a quandle homomorphism with $\phi(x) \notin \phi( \Alex(H,\alpha|_H))$. Hence, $\Alex(H, \alpha|_H)$ is a separable subquandle of $\Alex(G,\alpha)$, which proves (1). The proof of assertion (2) is analogous.
\end{proof}

Let $w = w(x, y)$ be a word in the free group on two generators $x$ and  $y$. For each group $G$, the word $w$ defines a binary operation $*_w : G \times G \to G$ given by $g *_w h = w(g, h)$ for all $g, h \in G$. In \cite[Proposition 3.1]{MR4406425}, a complete characterisation of words $w$ for which $(G, *_w)$ is a quandle for each group $G$ has been given. In fact, the only possible words are $w(x, y) = y x^{-1} y$ or $w(x, y) = y^{-n} x y^n$ for some $n \in \mathbb{Z}$.

\begin{prop}\label{conj-subseparable-quandle}
Let $w(x, y) = y x^{-1} y$ or $w(x, y) = y^{-n} x y^n$ be a word in the free group on two generators. Let $G$ be a subgroup separable group. Then each finitely generated subquandle $(H, *_w)$ of $(G, *_w)$, where $H$ is a subgroup of $G$, is separable in $(G, *_w)$.
\end{prop}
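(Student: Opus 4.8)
The plan is to follow the template of Proposition~\ref{alex-subseparable}: exploit subgroup separability of $G$ to produce a finite quotient that separates a given point from $H$, and then reinterpret the resulting group homomorphism as a quandle homomorphism. The crucial preliminary step is to upgrade the hypothesis that $(H,*_w)$ is finitely generated \emph{as a quandle} to the statement that $H$ is finitely generated \emph{as a group}.

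First I would record the elementary but decisive observation that, for each of the two admissible words, both $a *_w b$ and its dual $a *_w^{-1} b$ are group words in $a$ and $b$: for $w(x,y)=yx^{-1}y$ the operation is the core operation $a *_w b = b a^{-1} b$, which is an involution so that $*_w^{-1}=*_w$; for $w(x,y)=y^{-n}xy^n$ we have $a *_w b = b^{-n} a b^n$ and $a *_w^{-1} b = b^n a b^{-n}$. In particular $a *_w b,\, a *_w^{-1} b \in \langle a, b\rangle$. Consequently, if $T=\{h_1,\dots,h_k\}$ is a finite quandle-generating set of $(H,*_w)$, then the subgroup $\langle T\rangle$ is closed under $*_w$ and $*_w^{-1}$, hence is a subquandle containing $T$; therefore the subquandle generated by $T$, which is all of $H$, is contained in $\langle T\rangle$. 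Since also $T\subseteq H$ forces $\langle T\rangle \le H$, I conclude $H=\langle T\rangle$ is a finitely generated subgroup of $G$.

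With $H$ now a finitely generated subgroup, subgroup separability of $G$ gives, for each $g\in G\setminus H$, a finite group $F$ and a homomorphism $\phi\colon G\to F$ with $\phi(g)\notin\phi(H)$. By the characterisation in \cite[Proposition~3.1]{MR4406425}, $(F,*_w)$ is again a quandle, and since $w$ is a word in the free group, any group homomorphism satisfies $\phi(a *_w b)=w(\phi(a),\phi(b))=\phi(a) *_w \phi(b)$; thus $\phi$ is a quandle homomorphism $(G,*_w)\to(F,*_w)$ to a finite quandle. As $\phi(g)\notin\phi(H)$ and the image set $\phi(H)$ is the same whether $H$ is regarded as a subgroup or as the underlying set of the subquandle, this exhibits the required finite quandle separating $g$ from $(H,*_w)$. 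Hence $(H,*_w)$ is separable in $(G,*_w)$.

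I expect the only real obstacle to be the reduction in the second paragraph: the passage from finite generation as a quandle to finite generation as a group is what makes subgroup separability applicable, and it hinges on the special feature of these two words that the quandle operation and its dual never leave the subgroup generated by their inputs. Once this is in place, the remainder is the routine transport of a group-theoretic separation to a quandle-theoretic one, exactly as in Proposition~\ref{alex-subseparable}.
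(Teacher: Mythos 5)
Your proposal is correct and follows essentially the same route as the paper's proof: reduce finite generation as a quandle to finite generation as a group, then transport a separating finite group quotient to a finite quandle quotient via the word $w$. The only difference is that you spell out the reduction step (that $\langle T\rangle$ is closed under $*_w$ and $*_w^{-1}$, so $H=\langle T\rangle$), which the paper asserts without detail.
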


\begin{proof}
Let $(H, *_w)$ be a finitely generated subquandle of $(G, *_w)$. It follows that $H$ is a finitely generated subgroup of $G$. Since $G$ is subgroup separable, for any $x \in G \setminus H$, there is a finite group $F$ and a group homomorphism $\phi:G \to F$ such that $\phi(x) \not\in \phi(H)$. Viewing $\phi$ as a map $(G, *_w) \to (F, *_w)$, we get $\phi(x) \not\in \phi((H, *_w))$. Hence, $(H, *_w)$  is a separable subquandle of $(G, *_w)$.
\end{proof}

\begin{prop}\label{intersectionp of subquandles}
Let $X$ be a quandle. Then the following assertions hold: 
\begin{enumerate}
\item If $\{X_i \mid i \in I \}$ is a family of separable subquandles of $X$, then $\cap_{i \in I}X_i$ is a separable subquandle of $X$.
\item If  $\{ \alpha_i\mid i \in I \}$ is a family of automorphisms of $X$, then $\cap_{i \in I} \Fix(\alpha_i)$ is a separable subquandle of $X$. Here, $\Fix(\alpha_i)= \{x \in X \mid \alpha_i(x)=x \}$ for each $i$.
\end{enumerate}
\end{prop}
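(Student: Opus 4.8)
The plan is to work directly with the definition of separability in terms of homomorphisms to finite quandles, handling the two assertions in sequence, with part (2) reducing to part (1) once I verify that each $\Fix(\alpha_i)$ is separable. For part (1), let $Y = \cap_{i \in I} X_i$ and suppose $x \in X \setminus Y$. Then $x \notin X_{i_0}$ for some index $i_0 \in I$. Since $X_{i_0}$ is separable in $X$, there exists a finite quandle $F$ and a quandle homomorphism $\phi : X \to F$ with $\phi(x) \notin \phi(X_{i_0})$. The key observation is that $Y \subseteq X_{i_0}$, so $\phi(Y) \subseteq \phi(X_{i_0})$, and hence $\phi(x) \notin \phi(Y)$. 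This is exactly what separability of $Y$ requires, so part (1) follows immediately with no further computation.

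For part (2), by part (1) it suffices to show that each individual $\Fix(\alpha_i)$ is a separable subquandle of $X$; the intersection is then separable automatically. Fix an automorphism $\alpha = \alpha_i$ and let $x \in X \setminus \Fix(\alpha)$, so that $\alpha(x) \neq x$. The natural idea is to find a finite quandle $F$ and a homomorphism $\phi : X \to F$ under which $x$ is sent away from the image of $\Fix(\alpha)$. The first thing I would try is to exploit the residual finiteness implicit in the setup: since $\alpha(x) \neq x$, I would seek a homomorphism $\phi : X \to F$ to a finite quandle that separates $x$ from $\alpha(x)$, and then arrange $\phi$ to interact compatibly with $\alpha$ so that $\phi(\Fix(\alpha))$ lands inside the fixed set of an induced automorphism of $F$.

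\emph{Main obstacle.} The delicate point in part (2) is that separability of $\Fix(\alpha)$ does not follow from the hypotheses of part (1) alone: part (1) presupposes that the subquandles are separable, whereas here I must \emph{prove} separability of each $\Fix(\alpha_i)$ from scratch, and this genuinely needs $X$ to be residually finite (or some comparable finiteness assumption). The clean approach is to push $\alpha$ forward to a finite quotient: given a homomorphism $\psi : X \to F$ onto a finite quandle separating the relevant points, one wants $\alpha$ to descend to an automorphism $\overline{\alpha}$ of $F$ with $\psi \circ \alpha = \overline{\alpha} \circ \psi$, so that $\psi(\Fix(\alpha)) \subseteq \Fix(\overline{\alpha})$; then if $\psi(x) \notin \Fix(\overline{\alpha})$ we are done, and otherwise one refines $\psi$ using residual finiteness to separate $x$ from $\alpha(x)$. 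Making this descent work uniformly, and verifying that the hypotheses of the proposition actually supply the finiteness needed, is the step I expect to require the most care; I would check whether the statement tacitly assumes $X$ residually finite or whether one should invoke Proposition \ref{finite-sub-separable} to reduce to that case.
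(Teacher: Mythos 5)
Your part (1) is correct and is word-for-word the paper's argument: since $\cap_{i\in I}X_i\subseteq X_{i_0}$, any finite quotient witnessing $x\notin X_{i_0}$ also witnesses $x\notin\cap_{i\in I}X_i$.

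Part (2) contains the genuine gap. The paper makes exactly the reduction you make --- it suffices to know that each individual $\Fix(\alpha_i)$ is a separable subquandle and then apply assertion (1) --- but it then \emph{closes} that reduction in one line by citing \cite[Proposition 6.4]{MR3981139} for the separability of the fixed-point subquandle of an automorphism. You never supply this ingredient: your proposal ends with a description of a strategy (``separate $x$ from $\alpha(x)$ in a finite quotient and push $\alpha$ forward'') together with an explicit admission that the key step is unverified. As sketched, the strategy has two unresolved points: an arbitrary automorphism of $X$ need not descend to a given finite quotient $\phi:X\to F$, and even when $\phi(\alpha(x))\neq\phi(x)$ one still needs an automorphism $\overline{\alpha}$ of $F$ with $\phi\circ\alpha=\overline{\alpha}\circ\phi$ in order to get $\phi(\Fix(\alpha))\subseteq\Fix(\overline{\alpha})$ and hence $\phi(x)\notin\phi(\Fix(\alpha))$. (For an inner automorphism $\alpha=S_{y_1}^{\epsilon_1}\cdots S_{y_k}^{\epsilon_k}$ of a residually finite quandle both points are immediate, with $\overline{\alpha}=S_{\phi(y_1)}^{\epsilon_1}\cdots S_{\phi(y_k)}^{\epsilon_k}$; that computation is what you would need to write down, or else you must correctly invoke the external result as the paper does.) So the approach is the right one, but the proof is incomplete precisely at the point where all the content of (2) lives.

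For what it is worth, your suspicion that some finiteness hypothesis is doing hidden work is not a misreading on your part: the proposition as printed places no hypothesis on $X$, and a connected quandle whose only finite quotient is a point (for instance, a generating conjugacy class of an infinite simple group, with $\alpha$ an inner automorphism) has a proper nonempty $\Fix(\alpha)$ that cannot be separable. So the cited Proposition 6.4 (and whatever hypotheses it carries) is genuinely load-bearing here, and a blind proof that neither proves nor invokes it has not established assertion (2).
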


\begin{proof}
If $x\in X\setminus \cap_{i \in I}X_i$, then there exists $j$ such that $x\in X\setminus X_j$. Since $X_j$ is separable in $X$, we have a surjective quandle homomorphism $\phi:X\to Y$, where $Y$ is finite, such that $\phi(x)\notin \phi(X_j)$. Thus, $\phi(x)\notin \phi(\cap_{i \in I}X_i)$, which is desired.
\par		
By  \cite[Proposition 6.4]{MR3981139}, each $\Fix(\alpha_i)$ is a separable subquandle of $X$. It now follows from assertion (1) that  $\cap_{i \in I} \Fix(\alpha_i)$  is a separable subquandle of $X$.
\end{proof}

Let $(X_1,\star_1)$ and $(X_2,\star_2)$ be quandles,  $f\in \C_{\Aut(X_1)}(\Inn(X_1))$ and $g\in \C_{\Aut(X_2)}(\Inn(X_2))$. Then, by \cite[Section 9]{MR3948284}, $X:=X_1\sqcup X_2$ turns into a quandle with the operation $\ast$ defined as 
$$x\ast y=\begin{cases}
		x\star_1 y \quad \text{ if } x,y\in X_1,\\
		x \star_2 y \quad \text{ if } x,y\in X_2,\\
		f(x)  \quad \text{ if } x\in X_1 \text{ and } y \in X_2,\\
		g(x) \quad  \text{ if } x\in X_2 \text{ and } y \in X_1.
\end{cases}$$

\begin{prop}\label{twisted union subquandle sep}
Let $(X_1,\star_1)$ and $(X_2,\star_2)$ be subquandle separable quandles. If $f\in \Z(\Inn(X_1))$ and $g\in \Z(\Inn(X_2))$ are finite order elements, then $(X,\ast)$ is subquandle separable.
\end{prop}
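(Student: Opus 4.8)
The plan is to handle the two components of $X$ separately and reduce subquandle separability of $X$ to that of $X_1$ and $X_2$. First I would record the structural fact that the projection $\pi\colon X\to\{1,2\}$ onto the two-element trivial quandle, sending $X_i$ to $i$, is a quandle homomorphism: this is immediate from the four cases defining $\ast$, since $x\ast y$ always lies in the same component as $x$ (here one uses $f(X_1)=X_1$ and $g(X_2)=X_2$). Consequently, for any subquandle $Y\le X$ the sets $Y_1:=Y\cap X_1$ and $Y_2:=Y\cap X_2$ are subquandles of $X_1$ and $X_2$ respectively, and $Y=Y_1\sqcup Y_2$.

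Next I would show that if $Y=\langle T\rangle$ is finitely generated, with $T$ finite and $T_i=T\cap X_i$, then each $Y_i$ is finitely generated. Writing an arbitrary element of $Y_1$ as a left-associated product $t\ast^{\epsilon_1}s_1\ast^{\epsilon_2}\cdots\ast^{\epsilon_n}s_n$ of generators with $t\in T_1$ (using the normal form recalled earlier), every factor $s_j\in T_1$ contributes the map $S_{s_j}^{\epsilon_j}$ while every factor $s_j\in T_2$ contributes $f^{\epsilon_j}$. Since $f\in\Z(\Inn(X_1))$ commutes with every $S_{s_j}$ and has finite order $d$, all the $f$'s can be collected to the outside, giving $Y_1=\bigcup_{N=0}^{d-1}f^{N}(\langle T_1\rangle_{\star_1})$; hence $Y_1$ is generated by the finite set $\{f^{N}(t)\mid t\in T_1,\ 0\le N<d\}$ and is $f$-invariant. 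The same argument with $g$ handles $Y_2$. The finiteness of the order of $f$ is essential here.

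The core of the proof, and the main obstacle, is the following. Given $x\in X\setminus Y$, say $x\in X_1$ (so that $x\in X_1\setminus Y_1$), subquandle separability of $X_1$ yields a finite quandle $F$ and a homomorphism $\psi\colon X_1\to F$ with $\psi(x)\notin\psi(Y_1)$. This $\psi$ need not be compatible with $f$, which is exactly what I need in order to splice the two components into a single finite quandle. To repair this I would pass to the refined homomorphism $\psi_1=(\psi,\psi f,\ldots,\psi f^{d-1})\colon X_1\to F^{d}$ and put $F_1=\psi_1(X_1)$, a finite quandle whose kernel congruence $\bigcap_{N=0}^{d-1}\ker(\psi f^{N})$ is $f$-invariant. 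Thus $f$ descends to an automorphism $\bar f$ of $F_1$ with $\psi_1 f=\bar f\psi_1$; since $\psi_1$ is surjective and $f$ centralises $\Inn(X_1)$, a short computation shows $\bar f\in\Z(\Inn(F_1))$ (in particular $\bar f\in\C_{\Aut(F_1)}(\Inn(F_1))$), and $\bar f$ has finite order because $F_1$ is finite. Crucially $\psi$ factors through $\psi_1$, so we still have $\psi_1(x)\notin\psi_1(Y_1)$.

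Finally I would assemble the target. Take $F_2$ to be the one-point quandle with $\bar g=\mathrm{id}$ and $\psi_2\colon X_2\to F_2$ the constant map, and form the twisted union $F=F_1\sqcup F_2$ using $\bar f$ and $\bar g$, which is legitimate since $\bar f\in\C_{\Aut(F_1)}(\Inn(F_1))$. Then $\Phi=\psi_1\sqcup\psi_2\colon X\to F$ is a quandle homomorphism: the intra-component cases are clear, while for $a\in X_1,\ b\in X_2$ one has $\Phi(a\ast b)=\psi_1(f(a))=\bar f(\psi_1(a))=\Phi(a)\ast\Phi(b)$, and symmetrically on the other side. Because $\Phi$ respects components, $\Phi(x)=\psi_1(x)\in F_1$ with $\psi_1(x)\notin\psi_1(Y_1)$, and $\psi_2(Y_2)\subseteq F_2$ is disjoint from $F_1$, we conclude $\Phi(x)\notin\Phi(Y)$. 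The case $x\in X_2$ is symmetric, using subquandle separability of $X_2$ and the finite order of $g$. I expect the delicate points to be the finite generation of $Y_1$ (where both centrality and finite order of $f$ enter) and the verification that the descended automorphism $\bar f$ is central in $\Inn(F_1)$.
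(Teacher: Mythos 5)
Your proof is correct and follows essentially the same route as the paper: reduce to the finitely generated subquandle $Y\cap X_1$ using centrality and the finite order of $f$, separate it in $X_1$, and extend the separating homomorphism to all of $X$ by collapsing $X_2$ to a single point, with the descended automorphism $\bar f$ governing the cross-terms (the paper constructs $F\sqcup\{p\}$ and verifies the quandle axioms by hand rather than citing the twisted-union construction). The only substantive difference is your refinement $\psi_1=(\psi,\psi f,\ldots,\psi f^{d-1})$, which is harmless but unnecessary: since $f\in\Z(\Inn(X_1))\subseteq\Inn(X_1)$, it automatically descends along any surjective quandle homomorphism, which is exactly how the paper obtains well-definedness of $x\circ' p$.
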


\begin{proof}
Let $Y$ be a  finitely generated subquandle of $X$ and $z\in X\setminus Y$. Without loss of generality, we can assume that $z\in X_1$. We first claim that $Y\cap X_1$ is a finitely generated subquandle of $X_1$. Suppose that $Y$ is generated by $\{a_1,a_2,\ldots,a_k,b_1,b_2,\ldots,b_\ell\}$, where each $a_i\in X_1$ and each $b_j\in X_2$. Suppose that the order of $f$ is $m$. Since $f\in \Z(\Inn(X_1))$, a direct check using the quandle operation in $X$ shows that $Y\cap X_1$ is generated by the finite set $\{f^{t_i}(a_i)\mid 1\leq i\leq k, ~ 1\leq t_i\leq m\}$.  Since $z\in X_1$ and $Y\cap X_1$ is a finitely generated separable subquandle of $X_1$,  there is a surjective quandle homomorphism $\phi:X_1\to F$, where $(F,\circ)$ is a finite quandle, such that $\phi(z)\notin \phi(Y\cap X_1)$. Let $p$ be a symbol disjoint from $F$ and $F'=F\sqcup\{p\}$. Define a binary operation $\circ'$ on $F'$ as 
$$x\circ' y=\begin{cases}
			x\circ y \quad\text{ if } x,y\in F,\\
			p   \quad\quad\quad\text{ if } x=p,\\
			\phi(f(z)) \quad\text{ if } y=p,~x \ne p
		\end{cases}$$
where $z \in X_1$ is such that $\phi(z)=x$. We claim that $(F',\circ')$ is a quandle. Suppose that $z_1, z_2 \in X_1$ such that $\phi(z_1)=\phi(z_2)=x$. Since $f \in \Inn(X_1)$,
we see that  $\phi(f(z_1))= \phi(f(z_2))$, and the binary operation $\circ'$ is indeed well-defined. Let $S_p: F' \to F'$ be the right multiplication by $p$. For arbitrary $x_1, x_2 \in F$, let $z_1, z_2 \in X_1$ such that $\phi(z_1)=x_1$ and  $\phi(z_2)=x_2$.  Suppose that $S_p(x_1)=S_p(x_2)$, that is, $x_1 \circ' p= x_2 \circ' p$. This gives $\phi(f(z_1))= \phi(f(z_2))$. Since $f \in \Inn(X_1)$, it follows that $x_1=\phi(z_1)=\phi(z_2)=x_2$. Thus, $S_p$ is injective, and finiteness of $F'$ implies that it is a bijection of $F'$. Further, we see that 
$$S_p(x_1 \circ' x_2)= S_p(x_1 \circ x_2)= \phi f (z_1 \star_1 z_2) =\phi f(z_1) \circ' \phi f(z_2) = S_p(x_1) \circ' S_p(x_2),$$
$$S_p(x_1 \circ' p)= S_p(\phi f(z_1))= \phi f(f(z_1)) =\phi f(z_1) \circ' p = S_p(x_1) \circ' S_p(p)$$
and
$$S_p(p \circ' x_2)= S_p(p)=p =p \circ' S_p(x_2) = S_p(p) \circ' S_p(x_2).$$
Using the fact that $f \in \Z(\Inn(X_1))$, we have
$$S_{x_1}(x_2 \circ' p)= S_{x_1}(\phi f (z_2))=\phi S_{z_1} f(z_2)=\phi f S_{z_1}(z_2) = S_{x_1}(x_2) \circ' p = S_{x_1}(x_2) \circ' S_{x_1}(p)$$
and
$$S_{x_1}(p \circ' x_2)= S_{x_1}(p)=p= S_{x_1}(p) \circ' S_{x_1}(x_2).$$
This proves our claim. It is easy to see that the map $\Phi:X\to F'$ defined by $\Phi(X_2)=p$ and $\Phi(y)=\phi(y)$ for $y\in X_1$,  is a quandle homomorphism. Further, $\Phi(z)\notin \Phi(Y)$, and the result follows.
\end{proof}

\begin{remark}
We note that the finiteness of orders of $f$ and $g$ in Proposition \ref{twisted union subquandle sep} is used only to ascertain finite generation of $Y \cap X_1$ as a subquandle of $X_1$. If $(X_1,\star_1)$ and $(X_2,\star_2)$ are residually finite, and  $f\in \Z(\Inn(X_1))$ and $g\in \Z(\Inn(X_2))$ (not necessarily of finite orders), then the proof implies that $(X,\ast)$ is residually finite.
\end{remark}

The following observation will be used to establish subquandle separability of some abelian quandles.

\begin{prop}\label{n-abelian finite}
Let $X$ be an abelian quandle. Then the following assertions hold:
\begin{enumerate}
\item The number of connected components of $X$ equals the cardinality of a minimal generating set for $X$.
\item If $X$ is finitely generated, then its $n$-quandle $X_n$ is finite for each $n \geq 2.$
\end{enumerate}
\end{prop}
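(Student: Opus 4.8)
The plan is to exploit the homogeneous representation of an abelian quandle together with one structural peculiarity: in the abelian case the inner automorphism $S_x$ depends only on the connected component of $x$. Indeed, axiom (3) gives $S_{x \ast y} = S_y S_x S_y^{-1}$, and since $\Inn(X)$ is abelian this collapses to $S_{x \ast y} = S_x$; the same holds for $\ast^{-1}$, so iterating yields $S_{x \cdot g} = S_x$ for every $g \in \Inn(X)$. Hence, if $\{x_i \mid i \in I\}$ is a set of representatives of the components (the orbits of $\Inn(X)$), then $\Inn(X)$ is already generated by $\{S_{x_i} \mid i \in I\}$.

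For assertion (1), I would first prove the lower bound. Since $u \ast v = u \cdot S_v$ lies in the component $C_u$ of $u$, every element produced from a subset $A$ by iterated quandle operations stays in $\bigcup_{a \in A} C_a$; thus $\langle A \rangle \subseteq \bigcup_{a \in A} C_a$, and a generating set must meet each component. For the reverse inequality I would show that a transversal $\{x_i \mid i \in I\}$ generates $X$: right-multiplying $x_i$ by the various $x_j$ and their duals realises $x_i \cdot w$ for every word $w$ in the $S_{x_j}^{\pm 1}$, and as such words exhaust $\Inn(X)$ one sweeps out the whole component $x_i \cdot \Inn(X) = C_i$. Combining the two bounds, the minimal number of generators of $X$ equals $|I|$, the number of components.

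For assertion (2), finite generation of $X$ forces $I$ to be finite by (1). The quotient map $X \to X_n$ is a surjective quandle homomorphism, so $X_n$ inherits the identity $(x \ast y) \ast z = (x \ast z) \ast y$ and is again abelian, and it has at most $|I|$ components, say with representatives $\bar{x}_1, \dots, \bar{x}_k$. Because $X_n$ is an $n$-quandle, each $S_{\bar{x}_j}$ has order dividing $n$; together with the structural fact above this shows that $\Inn(X_n)$ is an abelian group generated by the $k$ elements $S_{\bar{x}_1}, \dots, S_{\bar{x}_k}$, each of order dividing $n$, whence $|\Inn(X_n)| \le n^{k}$. Finally, the homogeneous representation $X_n \cong \sqcup_{j=1}^{k} (\Inn(X_n)/H_j, S_{\bar{x}_j})$ used in the proof of Proposition \ref{X is rf for abelian} exhibits $X_n$ as a finite union of orbits each of size at most $|\Inn(X_n)|$, so $X_n$ is finite.

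The load-bearing step is the observation that $S_x$ is a component invariant in the abelian setting; once this is in hand, assertion (1) reduces to bookkeeping with orbits and assertion (2) to the finiteness of a finitely generated abelian group of bounded exponent. The only point that genuinely needs care is the upper bound in (1): verifying that a single representative per component, acted on by the right-multiplications coming from the other representatives, really sweeps out its entire component — but this is precisely the content of $\Inn(X)$ being generated by $\{S_{x_i}\}$.
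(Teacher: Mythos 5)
Your argument is correct, but it is organised differently from the paper's at both steps. The shared engine is the identity $S_{x\ast^{\pm 1}y}=S_y^{\mp 1}S_xS_y^{\pm 1}=S_x$, which you state explicitly and the paper uses implicitly. For part (1) you prove the upper bound constructively: a transversal of the components generates $X$ because $\Inn(X)=\langle S_{x_i}\mid i\in I\rangle$. The paper instead argues by contradiction that two elements of a \emph{minimal} generating set cannot lie in the same component, by reordering the abelian word $\eta=S_{x_1}^{\epsilon_1}\cdots S_{x_r}^{\epsilon_r}$ to expel the factors $S_x$ and $S_y$. This matters if ``minimal'' is read as ``irredundant'' rather than ``of minimum cardinality'': your closing sentence only addresses the minimum number of generators. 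Your transversal argument does give the irredundant version as well --- deleting one of two generators sharing a component still leaves a set containing a transversal, hence a generating set --- but you should record that extra sentence. For part (2) the paper counts normal forms directly: every element of $X_n$ is $x_i\ast^{\epsilon_1}x_1\ast^{\epsilon_2}x_2\cdots\ast^{\epsilon_r}x_r$ with $0\le\epsilon_j\le n-1$ and $\epsilon_i=0$, giving $|X_n|\le rn^{r-1}$. You instead bound $|\Inn(X_n)|\le n^{k}$ (an abelian group on $k$ generators of order dividing $n$) and invoke the homogeneous orbit decomposition to get $|X_n|\le kn^{k}$. These are essentially the same computation viewed from the quandle side and from the group side; your version isolates the group-theoretic content cleanly, while the paper's avoids any appeal to the coset representation.
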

\begin{proof}
Let $S$ be a minimal generating set for $X$. Then the number of connected components of $X$ is at most $| S|$. For the converse, suppose that there exist $x,y\in S$ which are in the same connected component, that is, there is an element $\eta\in \Inn(X)$ such that $\eta(x)=y$. Since $ \Inn(X)$ is an abelian group, we can write $\eta=S_{x_1}^{\epsilon_1}S_{x_2}^{\epsilon_2}\dots S_{x_r}^{\epsilon_r}$, where $\epsilon_i\in \mathbb{Z}$ and $x_i\in S$ are distinct generators. If $x_i=x$ or $y$ for some $i$, then by reordering, we can assume that $x_1=y$ or $x_r=x$. This gives $S_{x_2}^{\epsilon_2}S_{x_3}^{\epsilon_3}\dots S_{x_{r-1}}^{\epsilon_{r-1}}(x)=y$, where none of the $x_i$ equals $x$ or $y$. Thus, the generator $y$ can be written as a product of other generators from $S$, which contradicts the minimality of $S$. This proves assertion (1).
\par
If $X$ is finitely generated and abelian, then so is $X_n$. Let $S=\{x_1,x_2,\dots,x_r\}$ be a finite generating set for $X_n$. Then, any element of $X_n$ can be written in the form $x_i\ast^{\epsilon_1}x_1\ast^{\epsilon_2}x_2\cdots\ast^{\epsilon_r}x_r$ for some $1 \le i \le r$ and  $0 \le \epsilon_j \le n-1$ with $\epsilon_i=0$. Thus, we have $|X_n|\leq rn^{r-1}$, which proves assertion (2).
\end{proof}

\begin{theorem}\label{two generated abelian quandle}
An abelian quandle generated by two elements is subquandle separable.
\end{theorem}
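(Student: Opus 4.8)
The plan is to reduce the statement to a structural dichotomy: for an abelian quandle $X$ generated by two elements, \emph{every finitely generated subquandle is either finite or all of $X$}. Granting this, the theorem follows immediately. Indeed, if $Y$ is a finitely generated subquandle with $Y=X$, then $X\setminus Y=\emptyset$ and there is nothing to separate; and if $Y$ is finite, then it is a finite subquandle of $X$, which is residually finite by Proposition \ref{X is rf for abelian}, so $Y$ is separable by Proposition \ref{finite-sub-separable}. Thus the whole argument hinges on controlling the finitely generated subquandles of $X$.

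The first step is to describe the connected components. By Proposition \ref{n-abelian finite}(1), the number of orbits of $X$ under $\Inn(X)$ equals the size of a minimal generating set, so $X$ has at most two orbits, say $X_1=\Orb(a)$ and $X_2=\Orb(b)$ for generators $a,b$. The key observation is that each orbit is a \emph{trivial} subquandle: if $u$ and $v$ lie in the same orbit, say $v=S_{w_1}^{\varepsilon_1}\cdots S_{w_r}^{\varepsilon_r}(u)$ for some $w_i\in X$, then $S_v=S_u$ because $\Inn(X)$ is abelian (conjugate inner automorphisms coincide), and hence $u\ast v=S_v(u)=S_u(u)=u\ast u=u$. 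Consequently a finitely generated subquandle contained in a single orbit is finite, since the subquandle generated by a finite subset of a trivial quandle is that finite set itself.

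The heart of the argument, and the step I expect to be the main obstacle, is the dichotomy: a finitely generated subquandle $Y$ meeting \emph{both} orbits must equal $X$. I would pick $y_1\in Y\cap X_1$ and $y_2\in Y\cap X_2$. Since $y_1$ and $a$ lie in the same orbit we have $S_{y_1}=S_a$, and likewise $S_{y_2}=S_b$; moreover $\Inn(X)=\langle S_a,S_b\rangle$, so $S_{y_1}$ and $S_{y_2}$ already generate all of $\Inn(X)$. As $Y$ is closed under the maps $z\mapsto z\ast y$ and $z\mapsto z\ast^{-1}y$ for $y\in Y$, it is invariant under the subgroup of $\Inn(X)$ generated by $S_{y_1}$ and $S_{y_2}$, namely $\Inn(X)$ itself; therefore $X_1=\Orb(y_1)\subseteq Y$ and $X_2=\Orb(y_2)\subseteq Y$, forcing $Y=X$. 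The two delicate points are to verify that $\Inn(X)$ is generated by $S_a$ and $S_b$ (which follows from the identity $S_{u\ast v}=S_u$ valid in any abelian quandle, so that $S_w$ depends only on the base letter of a left-associated expression for $w$, and this base lies in $\{a,b\}$) and to see that each component is a single $\Inn(X)$-orbit which is absorbed entirely once $Y$ has a foothold in the complementary component. With the dichotomy established, the reduction of the first paragraph completes the proof; the degenerate case of a single orbit is covered directly, as then $X$ is itself a trivial, hence finite, quandle.
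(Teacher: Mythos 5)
Your proof is correct, and it reaches the paper's conclusion by the same overall strategy --- show that a proper finitely generated subquandle must be a finite trivial subquandle, then invoke Propositions \ref{X is rf for abelian} and \ref{finite-sub-separable} --- but your route to the key dichotomy is genuinely different and arguably cleaner. The paper first normalizes the presentation: using Proposition \ref{n-abelian finite}(2) it argues that an infinite two-generated abelian quandle cannot have defining relations supported in both orbits, reduces to $X=\langle x,y\mid x*^n y=x\rangle$, and then verifies by an explicit computation with the elements $x*^k y$ and $y*^l x$ that a subquandle meeting both orbits contains $x$ and $y$. You bypass the presentation entirely: the identity $S_{u*v}=S_v^{-1}S_uS_v=S_u$, valid whenever $\Inn(X)$ is abelian, shows that $S_w$ depends only on the orbit of $w$, whence each orbit is a trivial subquandle, $\Inn(X)=\langle S_a,S_b\rangle$, and any subquandle meeting both orbits is invariant under all of $\Inn(X)$ and therefore equals $X$. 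This buys two things: the dichotomy (finite or everything) is seen to hold without the paper's analysis of the relator set, and the ``meets both orbits $\Rightarrow$ equals $X$'' half holds for arbitrary subquandles, not just finitely generated ones. One small wording point: in the single-orbit degenerate case, ``trivial, hence finite'' should read ``trivial and two-generated, hence of at most two elements,'' since a trivial quandle need not be finite.
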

\begin{proof}
Let $X=\langle S\mid R\rangle$ be a presentation of $X$, where $S=\{x,y\}$.  Since $X$ is abelian, any element of $X$ can be written in the form $x*^ny$ or $y*^mx$ for some $n, m \in \mathbb{Z}$. If $X$ is finite, then there is nothing to prove. So, we assume that $X$ is infinite. We claim that $R$ must be a singleton set. Suppose that $R$ contains elements from the orbits of both $x$ and $y$. This implies that $\{x*^{n}y=x,y*^mx=y\}\subseteq R$ for some $n, m \in \mathbb{Z}$. In this case, $X$ is an $\lcm(n,m)$-quandle. It follows from Proposition \ref{n-abelian finite} that $X$ must be a finite quandle, which is a contradiction. Thus,  $R$ contains elements from only one orbit, say, $R=\{x*^{n_1}y=x, x*^{n_2}y=x, \ldots\}$. An easy calculation shows that $X=\langle S\mid x*^ny=x\rangle$, where $n=\gcd(n_1,n_2,\ldots)$, and hence the claim holds. 
\par
We can now assume that $X=\langle x,y\mid x*^n y=x\rangle$ for some $n\in\mathbb{Z}$. Let $Y$ be a finitely generated subquandle of $X$ such that $Y \ne X$. If $x*^k y,y*^l x \in Y$ for some $k, l \in \mathbb{Z}$, then $x=(x*^ky)*^{-k}(y*^lx)\in Y$ and $y=(y*^lx)*^{-l}(x*^ky)\in Y$. This gives $Y=X$, which is a contradiction. Hence, $Y$ contains elements from only one orbit,  say, that of $x$. Consequently, $Y$ is generated by $\{x*^{n_1}y, x*^{n_2}y, \dots,x*^{n_r}y\}$ for some $r \ge 1$ and $n_i \in \mathbb{Z}$. Since $X$ is abelian, it follows that $Y$ is a finite trivial subquandle. The theorem now follows from Proposition \ref{X is rf for abelian} and  Proposition \ref{finite-sub-separable}
\end{proof}

Let $X=\langle x_1, x_2,\dots,x_r\rangle$ be a finitely generated abelian quandle. Then, any element of $X$ can be written in the form 
$$x_i*^{n_1}x_1*^{n_2}x_2\dots*^{n_r}x_r$$
for some $1 \le i \le r$ and $n_j \in \mathbb{Z}$ such that $n_i=0$.  Following  \cite{MR0781361}, we denote the element $x_i*^{n_1}x_1*^{n_2}x_2\dots*^{n_r}x_r$ of $X$ by the tuple of integers $(i;n_1,n_2,\dots,n_r)$. With this notation, the quandle operation in $X$ is given by
	\begin{equation}\label{operation of abelian quandle}
	 (i;n_{11},n_{12},\dots,n_{1r})*(j;n_{21},n_{22},\dots,n_{2r})=(i;n_1,n_2,\dots,n_r),	
	\end{equation}
  where $n_k=n_{1k}$ for $k\neq j$ and $n_j=n_{1j}+1$ if $j\neq i$.

\begin{theorem}\label{free abelian subquandle sep}
A finitely generated free abelian quandle is subquandle separable.
\end{theorem}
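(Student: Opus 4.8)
The plan is to combine the explicit coordinate model of a finitely generated free abelian quandle with its finite $n$-quandle quotients, and to separate an outside point from a subquandle by reducing coordinates modulo a sufficiently large integer. Write $X$ for the free abelian quandle on $x_1,\dots,x_r$. For the free quandle the representation recalled just before the theorem is faithful, so I identify the underlying set with the tuples $(i;n_1,\dots,n_r)$ with $1\le i\le r$, $n_i=0$ and $n_k\in\mathbb{Z}$; equivalently $X\cong\sqcup_{i=1}^r(\mathbb{Z}^r/\langle e_i\rangle,e_i)$ in the sense of Example~\ref{more example quandle}(3), where $e_1,\dots,e_r$ is the standard basis of $\mathbb{Z}^r$. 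By \eqref{operation of abelian quandle} the operation depends only on the component of the right operand: $(i;\vec n)*(j;\cdot)=(i;\vec n+e_j)$ when $j\ne i$, and fixes $(i;\vec n)$ when $j=i$ (with $*^{-1}$ subtracting $e_j$). Thus each of the $r$ components is a copy of $\mathbb{Z}^{r-1}$ on which $S_{x_j}$ acts as translation by $e_j$. Since $X$ is finitely generated and abelian, Proposition~\ref{X is rf for abelian} already gives that it is residually finite; the real content is the separability of the \emph{infinite} finitely generated subquandles.

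The key step is to pin down the shape of an arbitrary finitely generated subquandle $Y\le X$. Let $J\subseteq\{1,\dots,r\}$ be the set of components meeting $Y$; as $*$ and $*^{-1}$ preserve components, $J$ is exactly the set of components containing a generator of $Y$, hence finite. Put $V_j=\bigoplus_{k\in J\setminus\{j\}}\mathbb{Z}e_k$. I claim that if $(j;\vec a^{(1)}),\dots,(j;\vec a^{(s_j)})$ are the generators lying in component $j\in J$, then the part $Y_j$ of $Y$ in that component equals $\bigcup_t\{(j;\vec a^{(t)}+\vec v):\vec v\in V_j\}$, a finite union of cosets of $V_j$. The inclusion $\supseteq$ holds because operating a generator $(j;\vec a^{(t)})$ against the available elements of each component $k\in J\setminus\{j\}$ shifts its $k$-th coordinate by arbitrary integers. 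For $\subseteq$ I would verify that the displayed set is itself a subquandle containing every generator: operating any of its elements against an element of a component $k\in J$ either leaves it fixed ($k=j$) or translates it by $\pm e_k\in V_j$, so it stays in the set; minimality of $Y$ then forces $Y$ into it. In particular the coordinates indexed by $k\notin J$ are \emph{frozen} on $Y$, taking only the finitely many values $a^{(t)}_k$.

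It then remains to separate a point $z=(i;c_1,\dots,c_r)\in X\setminus Y$ from $Y$ using the finite $n$-quandle quotient $q_n:X\to X_n$, which is finite by Proposition~\ref{n-abelian finite}(2) and simply reduces every coordinate modulo $n$. If $i\notin J$, then $z$ sits in a component disjoint from $q_n(Y)$ for every $n$, so any such $q_n$ works. If $i\in J$, then $z\notin Y_i$ means that for each $t\le s_i$ the vectors $\vec c$ and $\vec a^{(t)}$ differ in some coordinate lying outside $J\setminus\{i\}$; this coordinate is not $i$ (both are $0$ there), so it is some $k_t\notin J$, and $d_t:=c_{k_t}-a^{(t)}_{k_t}\ne 0$. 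Choosing $n>\max_t|d_t|$ keeps $c_{k_t}\not\equiv a^{(t)}_{k_t}\pmod n$, so $q_n(z)$ differs from the image of each coset $(i;\vec a^{(t)})+V_i$ in the frozen coordinate $k_t$; hence $q_n(z)\notin q_n(Y_i)$, and since $q_n$ preserves components it is also disjoint from $q_n(Y_j)$ for $j\in J\setminus\{i\}$. Therefore $q_n(z)\notin q_n(Y)$, which gives the desired separation.

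I expect the main obstacle to be the structural description of finitely generated subquandles in the second step, specifically the verification that the frozen coordinates cannot be moved by any sequence of quandle operations and that the candidate coset-union is genuinely closed under $*$ and $*^{-1}$. Once this normal form is established, the modular-reduction argument of the third step is routine.
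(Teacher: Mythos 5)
Your proposal is correct and takes essentially the same route as the paper's proof: both identify $X$ with the tuple model $(i;n_1,\dots,n_r)$, observe that a finitely generated proper subquandle is a finite union of cosets of the lattice spanned by the components it meets (so the remaining coordinates are frozen), and then separate an outside point by reducing modulo a sufficiently large $N$ into the finite quandle $X_N$. The only cosmetic differences are that you handle the case where $z$ lies in a component missing $Y$ via $X_n$ where the paper uses a two-element trivial quandle, and that you spell out the coset description of $Y$ which the paper asserts directly from freeness.
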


\begin{proof}
Let $X$ be a finitely generated free abelian quandle. Then $X$ has a presentation $$X=\langle x_1, x_2,\dots,x_r\mid x_i*x_j*x_k=x_i*x_k*x_j \text{ for all } 1\leq i,j, k\leq r \rangle.$$ Let $Y$ be a finitely generated subquandle of $X$ such that $Y \ne X$. As in the proof of Theorem \ref{two generated abelian quandle}, it is clear that if $Y$ contains elements from each orbit under the action of $\Inn(X)$, then the repeated use of equation \eqref{operation of abelian quandle} implies that $Y$ contains all the generators of $X$. This gives $Y=X$, a contradiction. Now, suppose that $Y$ is generated by the set
		$$\big\{(i_1;n_{11}^1,n_{12}^1,\dots,n_{1r}^1), (i_1;n_{21}^1,n_{22}^1,\dots,n_{2r}^1),\dots,(i_1;n_{k_11}^1,n_{k_12}^1,\dots,n_{k_1r}^1),$$
		$$(i_2;n_{11}^2,n_{12}^2,\dots,n_{1r}^2), (i_2;n_{21}^2,n_{22}^2, \dots, n_{2r}^2),\dots,(i_2;n_{k_21}^2,n_{k_22}^2,\dots,n_{k_2r}^2), \dots,$$
		$$ (i_p;n_{11}^p,n_{12}^p,\dots,n_{1r}^p), (i_p;n_{21}^p,n_{22}^p,\dots,n_{2r}^p),\dots,(i_p;n_{k_p1}^p,n_{k_p2}^p,\dots,n_{k_pr}^p)\big\}.$$ 
Let us set $\{j_1, j_2, \ldots, j_q\} = \{1, 2, \ldots, r \} \setminus \{i_1, i_2, \ldots, i_p\}$. Let $x=(i;n_1,n_2,\dots,n_r)\in X\setminus Y$. If $i\in \{j_1,j_2,\dots,j_q\}$, then define the map $\eta: X\to \{a,b\}$ by $\eta(x_t)=a$ for $t\neq i$ and $\eta(x_i)=b$, where $\{a,b\}$ is a two element trivial quandle. Then, $\eta$ is a quandle homomorphism with $\eta(x)\notin \eta(Y)$, and we are done. Next, let $i\in \{i_1,i_2,\dots,i_p\}$, say $i=i_t$. Since $X$ is free abelian,  $x=(i;n_1,n_2,\dots,n_r)\notin Y$ if and only if $$(n_{j_1},n_{j_2},\dots,n_{j_q})\notin \{ (n_{1{j_1}}^{i_t},n_{1{j_2}}^{i_t},\dots,n_{1{j_q}}^{i_t}), (n_{2{j_1}}^{i_t},n_{2{j_2}}^{i_t},\dots,n_{2{j_q}}^{i_t}),\dots,(n_{k_t{j_1}}^{i_t},n_{k_t{j_2}}^{i_t},\dots,n_{k_t{j_q}}^{i_t})\}.$$ We choose a sufficiently large $N\in \mathbb{N}$ such that 
\begin{small}
$$(n_{j_1},n_{j_2},\dots,n_{j_q})\notin \{ (n_{1{j_1}}^{i_t},n_{1{j_2}}^{i_t},\dots,n_{1{j_q}}^{i_t}), (n_{2{j_1}}^{i_t},n_{2{j_2}}^{i_t},\dots,n_{2{j_q}}^{i_t}),\dots,(n_{k_t{j_1}}^{i_t},n_{k_t{j_2}}^{i_t},\dots,n_{k_t{j_q}}^{i_t})\} \mod N.$$ 
\end{small}
Let $X_N$ be the corresponding $N$-quandle of $X$, which is finite by Proposition \ref{n-abelian finite}. Then,  the quandle homomorphism $\eta:X\to X_N$ has the property that $\eta(x) \not\in \eta(Y)$, and the proof is complete.
\end{proof}

\begin{remark}
It is interesting to find an explicit example of a finitely presented quandle that is residually finite but not subquandle separable.
\end{remark}

\begin{ack}
NKD acknowledges support from the NBHM via the grant number 0204/1/2023/R \&D-II/1792. DS thanks IISER Mohali for the PhD research fellowship. MS is supported by the Swarna Jayanti Fellowship grants DST/SJF/MSA-02/2018-19 and SB/SJF/2019-20/04. The authors are grateful to the anonymous referees for numerous suggestions that have improved the paper substantially.
\end{ack}

\section{Declaration}
The authors declare that they have no conflicts of interest and that there is no data associated with this paper.


\begin{thebibliography}{HD}
\bibitem{MR4564617} T. Akita, \textit{Embedding Alexander quandles into groups},  J. Knot Theory Ramifications 32 (2023), no. 2, Paper No. 2350011, 4 pp.

\bibitem{MR4447657} T. Akita, A. Hasegawa, and M. Tanno, \textit{Structure of the associated groups of quandles}, Kodai Math. J. 45 (2022), 270--281.

\bibitem{MR1994219} N. Andruskiewitsch and M. Gra\~{n}a, \textit{From racks to pointed Hopf algebras}, Adv. Math. 178 (2003), no. 2, 177--243.

\bibitem{MR3444187} M. Aschenbrenner, S.  Friedl, and H. Wilton, \textit{3-manifold groups}, EMS Ser. Lect. Math. European Mathematical Society (EMS), Z\"{u}rich, 2015. xiv+215 pp.

\bibitem{MR3948284} V. G. Bardakov, T. Nasybullov, and M. Singh,  \textit{Automorphism groups of quandles and related groups}, Monatsh. Math. 189 (2019), no. 1, 1--21.

\bibitem{MR4406425} V. Bardakov, T. Nasybullov, and M. Singh, \textit{General constructions of biquandles and their symmetries}, J. Pure Appl. Algebra 226 (2022), no. 7, Paper No. 106936, 40 pp.


\bibitem{MR3981139} V. G. Bardakov, M. Singh, and M. Singh, \textit{Free quandles and knot quandles are residually finite}, Proc. Amer. Math. Soc. 147 (2019), no. 8, 3621--3633. 

\bibitem{MR4075375} V. G. Bardakov, M. Singh, and M. Singh,  \textit{Link quandles are residually finite}, Monatsh. Math. 191 (2020), no. 4, 679--690.

\bibitem{MR3440592} J. Belk and R. W. McGrail, \textit{The word problem for finitely presented quandles is undecidable}, Logic, language, information, and computation, Lecture Notes in Comput. Sci. 9160 (2015), 1--13.


\bibitem{MR3938088} A. S. Crans, B. Mellor, P. D. Shanahan, and J. Hoste, \textit{Finite $n$-quandles of torus and two-bridge links}, J. Knot Theory Ramifications 28 (2019), no. 3, 1950028, 18 pp.

\bibitem{MR0146828} R. H. Crowell and R. H. Fox, \textit{Introduction to knot theory}, Based upon lectures given at Haverford College under the Philips Lecture Program. Ginn and Company, Boston, MA, 1963. x+182 pp.

\bibitem{DhanwaniSingh2024} N. K. Dhanwani and M. Singh, \textit{Finiteness of canonical quotients of Dehn quandles of surfaces}, J. Aust. Math. Soc.  118 (2025), no.~3, 317--334.

\bibitem{Fish-Lisitsa} A. Fish and A. Lisitsa, \textit{Detecting unknots via equational reasoning, I: Exploration},	In Intelligent Computer Mathematics. CICM 2014, volume 8543 of Lecture Notes in Computer Science, pages 76--91. Springer, Cham, 2014.
	
\bibitem{MR0110750} P. Hall, \textit{On the finiteness of certain soluble groups}, Proc. London Math. Soc. (3)9 (1959), 595--622.

\bibitem{MR1851085} E. Hamilton, \textit{Abelian subgroup separability of Haken 3-manifolds and closed hyperbolic $n$-orbifolds}, Proc. London Math. Soc. 83 (2001), no. 3, 626--646.

\bibitem{hempel} J. Hempel, \textit{Residual finiteness for 3-manifolds}, Combinatorial group theory and topology (Alta, Utah, 1984), 379--396, Ann. of Math. Stud., 111, Princeton Univ. Press, Princeton, NJ, 1987.
			
\bibitem{MR3704243} J. Hoste and P. D. Shanahan, \textit{Links with finite $n$-quandles}, Algebr. Geom. Topol. 17 (2017), no. 5, 2807--2823.

\bibitem{Joyce1979} D. Joyce, \textit{An algebraic approach to symmetry with applications to knot theory}, Ph.D. Thesis, University of Pennsylvania, 1979.

\bibitem{MR4116819} V. Lebed and A. Mortier, \textit{Abelian quandles and quandles with abelian structure group}, J. Pure Appl. Algebra 225 (2021), no. 1, Paper No. 106474, 22 pp.

\bibitem{MR2414358} D. D. Long and A. W.  Reid,  \textit{Subgroup separability and virtual retractions of groups},  Topology 47 (2008), no.3, 137--159.

\bibitem{Malvev1958} A. I. Mal'cev, \textit{On homomorphisms onto finite groups}, Uchen. Zap. Ivanov Gos. Ped. Inst. 18 (1956), 49--60, [English transl. in: Trans. Amer. Math. Soc. Transl.
2, 119 (1983) 67--79].


\bibitem{MR0672410} S. V. Matveev, \textit{Distributive groupoids in knot theory}, in Russian: Mat. Sb. (N.S.) 119 (1) (1982) 78--88, translated in Math. USSR Sb. 47 (1) (1984), 73--83.

\bibitem{MR0008991} J. C. C. McKinsey, \textit{The decision problem for some classes of sentences without quantifiers}, J. Symbolic Logic 8 (1943), 61--76.

\bibitem{MR0310044} C. F. Miller, \textit{On group-theoretic decision problems and their classification}, Ann. of Math. Studies, No. 68, Princeton Univ. Press, Princeton, N.J., 1971.

\bibitem{MR0142125} J. Milnor, \textit{A unique decomposition theorem for 3-manifolds}, Amer. J. Math. 84 (1962), 1--7.

\bibitem{MR0781361} B. H. Neumann, \textit{Commutative quandles}, Groups Korea 1983 (Kyoungju, 1983), 81--86. Lecture Notes in Math., 1098 Springer-Verlag, Berlin, 1984.

\bibitem{MR1154898} G. A. Niblo, \textit{Separability properties of free groups and surface groups}, J. Pure Appl. Algebra 78 (1992), no. 1, 77--84.

\bibitem{MR1398726} E. Raptis and D. Varsos, \textit{On the subgroup separability of the fundamental group of a finite graph of groups}, Demonstratio Math. 29 (1996), no. 1, 43--52.

\bibitem{MR2634013} S. K. Winker, \textit{Quandles, knots invariants and the $n$-fold branched cover}, Ph.D. Thesis, University of Illinois at Chicago, 1984.
\end{thebibliography}
\end{document}